\newcommand{\xmark}{\color{red}\ding{55}}
\definecolor{lavender}{rgb}{0.9, 0.9, 0.98}
\pgfplotsset{compat=newest}
\newcommand{\lya}{\mathcal{R}}
\newcommand{\R}{\mathbb{R}}
\newcommand{\N}{\mathbb{N}}
\newcommand{\Rn}{\mathbb{R}^d}
\newcommand{\Prob}{\mathbb{P}}
\newcommand{\Exp}{\mathbb{E}}
\newcommand{\dist}{\mathrm{dist}}
\newcommand{\crit}{\mathrm{crit}}
\newcommand{\cG}{\mathcal{G}}
\newcommand{\sL}{L}
\newcommand{\sG}{G}
\newcommand{\sH}{H}
\newcommand{\sC}{C}
\newcommand{\sD}{D}
\newcommand{\sA}{A}
\newcommand{\sB}{B}
\newcommand{\nes}{\hat y}
\newcommand{\cO}{\mathcal O}
\newcommand{\cA}{\mathcal A}
\newcommand{\cF}{\mathcal F}
\newcommand{\iprod}[2]{\langle #1, #2 \rangle}
\newcommand{\SGD}{\mathrm{SGD}}
\newcommand{\SGDM}{\mathrm{SGDM}}
\newcommand{\SMG}{\mathrm{SMG}}
\newcommand{\RR}{\mathrm{RR}}
\newcommand{\RRM}{\mathrm{RRM}}
\newcommand{\RRHB}{\mathrm{RRHB}}
\newcommand{\RRNAG}{\mathrm{RRNAG}}
\newcommand{\IGM}{\mathrm{IGM}}
\newcommand{\SOM}{\mathrm{SOM}}
\definecolor{purp}{RGB}{152,24,147}
\definecolor{bluep}{RGB}{0,128,255}
\definecolor{redp}{RGB}{255,0,0}
\definecolor{OxfordBlue}{rgb}{0,0.106,0.329}   
\definecolor{UMRed}{rgb}{0.73,0.09,0.19}   
\definecolor{CUBrown}{RGB}{152,95,42}   
\definecolor{LightBrown}{RGB}{219,199,181}   
\definecolor{MyBlue}{RGB}{123,144,210} 
\definecolor{LightBlue}{RGB}{229,232,247}   
\definecolor{MyGreen}{RGB}{165,222,228}   
\definecolor{LightGreen}{RGB}{228,245,247} 
\definecolor{turql}{RGB}{53,130,134}
\definecolor{MyGrayTwo}{RGB}{180,180,180} 
\definecolor{McBlack}{RGB}{39,37,31} 
\definecolor{McRed}{RGB}{218,41,28} 
\definecolor{McYellow}{RGB}{255,199,44} 
\definecolor{DeepBlue}{RGB}{2,103,178}
\definecolor{DeepRed}{RGB}{192,0,0}
\definecolor{MyOrange}{RGB}{237,156,89}
\definecolor{MyGray}{RGB}{180,180,180}    
\definecolor{MyRed}{RGB}{192,0,0}  
\crefname{appendix}{Appendix}{Appendices}
\Crefname{appendix}{Appendix}{Appendices}
\theoremstyle{plain}
\newtheorem{theorem}{Theorem}[section]
\newtheorem{lemma}[theorem]{Lemma}
\newtheorem{corollary}[theorem]{Corollary}
\newtheorem{proposition}[theorem]{Proposition}
\newtheorem{remark}[theorem]{Remark}
\newtheorem{assumption}[theorem]{Assumption}
\theoremstyle{plain}
\newtheorem{definition}[theorem]{Definition}
\title{Random Reshuffling with Momentum: Complexity Bounds and Last-iterate Convergence}
\author{
Junwen Qiu\thanks{Industrial Systems Engineering and Management, National University of Singapore}
\and
Bohao Ma\thanks{School of Data Science, The Chinese University of Hong Kong, Shenzhen, Shenzhen, Guangdong, China\\
Email: \href{mailto:jwqiu@nus.edu.sg}{\texttt{jwqiu@nus.edu.sg}}, \href{mailto:bohaoma@link.cuhk.edu.cn}{\texttt{bohaoma@link.cuhk.edu.cn}}, 
\href{mailto:andremilzarek@cuhk.edu.cn}{\texttt{andremilzarek@cuhk.edu.cn}}}
\and
Andre Milzarek\footnotemark[2]
}
\begin{document}
\maketitle

\begin{abstract}
Random reshuffling with momentum ($\RRM$) corresponds to the $\SGD$ optimizer with the \emph{momentum} option enabled, as found in many machine learning libraries such as PyTorch and TensorFlow. Despite its widespread use, the convergence properties of $\RRM$ do not seem to be well understood.
This work establishes new complexity bounds and asymptotic convergence guarantees for popular versions of $\RRM$ using stochastic heavy-ball momentum, Nesterov acceleration, and mini-batches in a general nonconvex setting. In particular, we prove that the base variant of $\RRM$ achieves the complexity $\mathcal O(n^{-1/3}((1-\beta^n)T)^{-2/3})$, where $n$ denotes the number of samples, $\beta \in [0,1)$ is a momentum parameter, and $T$ is the total number of epochs. 
On the asymptotic side, we show that every accumulation point of the iterates $\{x^k\}_k$ generated by $\RRM$ is a stationary point of the problem. For definable objectives---a broad and common class of functions including, e.g., semialgebraic, globally subanalytic, and log-exp functions---we strengthen this subsequential result to last-iterate convergence to a single stationary point. Moreover, improved asymptotic complexity bounds are presented that are based on the additional geometric properties of definable functions.
\end{abstract}

\section{Introduction}
\label{intro}
Many minimization and learning tasks can be formulated as finite-sum problems \cite{bottou2018optimization}, where the goal is to minimize the sum of a potentially large number of component functions:
%
\begin{equation}
	\label{SO} \min_{x\in \Rn}~f(x) := \frac{1}{n}\;{\sum}_{i=1}^{n}\,f_i(x).
\end{equation}
Here, each function $f_i:\Rn\to\R$, $i\in[n]:=\{1,\dots,n\}$, is assumed to be continuously differentiable. Stochastic gradient methods leverage stochastic approximation techniques and are popular approaches for this class of finite-sum problems, \cite{robbins1951stochastic,chung1954stochastic,tseng1998incremental,ghadimi2012optimal,gurbu2019}. 
Recent research has shown that the use of \emph{without-replacement sampling schemes} in stochastic algorithms can have a positive impact on convergence and performance \cite{bottou2009curiously,bottou2012,gurbu2019,mishchenko2020random}. Such sampling strategies are widely applied in practice and are commonly referred to as random reshuffling ($\RR$). In fact, the basic stochastic gradient descent method ($\SGD$) is implemented using random reshuffling when solving large-scale machine learning problems; see \cite{bottou2012,tseng1998incremental} and the documentation in Scikit-learn\footnote{\url{https://scikit-learn.org/stable/modules/sgd.html}} and TensorFlow\footref{tf}, \cite{tensorflow2015}. A popular modification of $\RR$ is to incorporate \emph{momentum} in each iteration \cite{polyak1964some,nesterov1983method,werbos1974beyond,rumelhart1986learning,sutskever2013importance}. In the following, we present the core loop of random reshuffling with momentum ($\RRM$). A more formal introduction of the algorithm (with mini-batches) can be found in \Cref{sec:convergence analysis}. Let $\pi^k = (\pi_1^k,\dots,\pi_n^k)$ denote a random permutation of the numbers $\{1,\dots,n\}$ in the $k$-th epoch. Then, the update of $\RRM$ is given by: 
\begin{equation*}
    \begin{aligned} \text{$k$-th outer loop\,/} & \\ \text{\,$k$-th epoch} \end{aligned}\;\;\left[\;  
    \begin{aligned}
    & \text{Set} \; y_0 = \tilde x^k\;\; \text{and}\;\;  y_1 = x^k\\
    &\textbf{for} \; i=1,\ldots,n \; \textbf{do} \\
    &\qquad y_{i+1} = y_i - \alpha_k \nabla f_{\pi^k_i}(y_i + \lambda(y_i - y_{i-1})) + \beta(y_i-y_{i-1})\\[1mm]
& \text{Set} \; \tilde x^{k+1} = y_n \;\; \text{and}\;\;  x^{k+1} = y_{n+1}.
\end{aligned}
\right.
\end{equation*}
A key motivation for this work stems from the observation that $\RRM$ is a standard optimizer included in prominent machine learning libraries such as TensorFlow \cite{tensorflow2015} and PyTorch \cite{paszke2019pytorch}. For example, the code snippet shown below from TensorFlow\footnote{\url{https://www.tensorflow.org/api_docs/python/tf/keras/optimizers/SGD}\label{tf}} implements $\RRM$ with the constant learning rate $\alpha_k=0.1$ and momentum parameters $\beta=0.9$, $\lambda=0$: 
\begin{equation} \label{eq:code}
    \texttt{tf.keras.optimizers.SGD(learning\_rate = 0.1, momentum = 0.9)}.
\end{equation}
This specific variant of $\RRM$ is also known as the random reshuffling heavy-ball method ($\RRHB$). By selecting \texttt{`nesterov = True'} in \eqref{eq:code}, TensorFlow calls $\RRM$ with $\alpha_k=0.1$ and $\lambda = \beta =0.9$, which refers to random reshuffling with Nesterov acceleration ($\RRNAG$).

\subsection{Related work}
Originally developed by Werbos \cite{werbos1974beyond} and Rumelhart, Hinton, and Williams \cite{rumelhart1986learning} for neural network training, $\RRM$ has found widespread application in solving finite-sum problems \cite{rumelhart1986learning,sutskever2013importance,tseng1998incremental,yu2019linear}. However, the convergence behavior of $\RRM$ is less understood compared to $\RR$ and $\SGDM$\footnote{$\SGDM$ refers to the stochastic momentum variant(s) of $\SGD$, where the stochastic gradient information is generated via \emph{with-replacement} sampling schemes. In contrast to $\RR$-type methods (and $\RRM$), such stochastic gradients are then unbiased estimators of the full gradient.}. The challenges arise from two main aspects. Firstly, unlike $\SGD$, the stochastic gradients in $\RRM$ follow a \emph{without-replacement} sampling strategy and are not unbiased estimators of the true gradient. Secondly, the momentum steps in $\RRM$ tend to accumulate the stochastic errors from previous iterations, thus complicating the overall analysis; we refer to \Cref{subsec:challenges} for more detailed discussions.

Despite these challenges, there has been some important progress in understanding the convergence properties of $\RRM$. In \cite{tseng1998incremental}, Tseng studied an incremental gradient method with heavy-ball momentum ($\IGM$) which can be seen as a deterministic analogue of $\RRHB$. Tseng showed that accumulation points of a subsequence of iterates generated by $\IGM$ (equipped with an adaptive descent mechanism) are stationary points if $\beta < {0.5}^{1/n}$. Tran et al. \cite[Theorem 3]{tran2021smg} derived the following complexity result for $\RRHB$ under a bounded gradient assumption:
\begin{equation} \label{eq:tran-complexity} {\min}_{k=1,\ldots,T} \; \Exp[\|\nabla f(x^k)\|^2] = \cO((1-\beta^n)^{-1}  [T^{-2/3} + \beta^n]), \end{equation}
if the same (\emph{shuffle-once}) permutation $\pi^k \equiv \pi$ is used for all $k$. This complexity bound requires the momentum parameter $\beta$ to be sufficiently small to achieve a desired accuracy and it does \emph{not} tend to $0$ if $T \to \infty$. To our knowledge, the bound \eqref{eq:tran-complexity} seems to be the only available complexity result for $\RRM$; other existing bounds only apply to modified variants of $\RRM$ \cite{tran2021smg,tran2022nesterov}. 
In \cite{tran2021smg}, Tran et al.\ proposed a shuffling momentum-type gradient method ($\SMG$) that achieves the improved complexity 
$\min_{k=1,\ldots,T} \; \Exp[\|\nabla f(x^k)\|^2]=\cO((1-\beta)^{-1}n^{-1/3}T^{-2/3})$. 
Except for the additional factor $(1-\beta)^{-1}$, this bound exactly matches the iteration complexity of $\RR$, cf.\ \cite{mishchenko2020random} and Table~\ref{table:super-nice}.
$\SMG$ is a modified version of $\RRHB$ that employs a fixed momentum term throughout each outer loop (epoch). This update differs from conventional momentum methods used in practice. Specifically, the momentum component in $\SMG$ operates at the epoch level rather than the (inner) iterate level, and it is computed as the averaged gradient of all component functions evaluated during the preceding epoch. In the convex setting, Tran et al. \cite{tran2022nesterov} established convergence guarantees for the function value sequence for a variant of $\RRNAG$ with analogous structural modifications. Very recently, Liang and Xu \cite{JMLR:v26:24-1243} have analyzed the last-iterate convergence properties of $\SMG$. Specifically, they establish last-iterate convergence, $x^k \to x^*$, of $\SMG$ under the assumption that the objective function is coercive and satisfies a variant of the Kurdyka-{\L}ojasiewicz inequality. In addition, explicit convergence rates are shown under the stronger {\L}ojasiewicz inequality. The work \cite{JMLR:v26:24-1243} extends the theoretical analysis of $\RR$ developed in \cite{li2023convergence} to the shuffling momentum-type method $\SMG$. 

Finally, in a recent study, Josz and Lai \cite{josz2023global} established stability guarantees for $\RRM$ in a general nonsmooth setting if the objective function is tame and coercive. Josz and Lai proved that the trajectories of $\RRM$ can be approximated by subgradient trajectories and the iterates will eventually remain within a neighborhood of the set of stationary points.

\begin{table}[t]
\centering
{\footnotesize
\setlength{\tabcolsep}{5pt}
\NiceMatrixOptions{cell-space-limits=1pt}
\begin{NiceTabular}{|c| p{2.3cm}p{2cm}p{2cm}p{2cm}p{2.3cm}|c|}%
 [ 
   code-before = 
     \rectanglecolor{lavender!30}{2-2}{10-2}
    \rectanglecolor{lavender!30}{2-4}{10-4}
        \rectanglecolor{lavender!30}{2-6}{10-6}
 ]
\toprule
\Block{3-1}{\textbf{Alg.}} & \Block[c]{1-2}{\textbf{Conditions}} & & \Block[c]{1-3}{\textbf{Convergence: Nonconvex Case}} & & &  \Block{3-1}{\textbf{Ref.}}  \\ \Hline
& \Block{2-1}{bounded grad. \emph{not required}} 
& \Block{2-1}{$\beta\in[0,1)$ is \emph{free}} & \Block{2-1}{iteration complexity}
& \Block{2-1}{global conv. $\nabla f(x^k)\to0$} & \Block{2-1}{iterate conv. $x^k \to x^*$} & \\
 & & & & & &  \\[1mm] \Hline
\Block{2-1}{$\RR$} & \Block{1-1}{\checkmark} & \Block{1-1}{--} & \Block{1-1}{\checkmark} & \Block{1-1}{\xmark}  & \Block{1-1}{\xmark} & \cite{mishchenko2020random}\\
& \Block{1-1}{\checkmark} & \Block{1-1}{--} & \Block{1-1}{\xmark} & \Block{1-1}{\checkmark} & \Block{1-1}{\checkmark}  & \cite{li2023convergence}\\ \Hline
\Block{2-1}{$\SMG^{\textcolor{blue}{*}}$} & \Block{1-1}{\checkmark} & \Block{1-1}{\checkmark} & \Block{1-1}{\checkmark} & \Block{1-1}{\xmark} & \Block{1-1}{\xmark} & \cite{tran2021smg}\\ 
& \Block{1-1}{\checkmark} & \Block{1-1}{\checkmark} & \Block{1-1}{\checkmark} & \Block{1-1}{\checkmark} & \Block{1-1}{\checkmark} & \cite{JMLR:v26:24-1243} \\ \Hline
\Block{3-1}{$\RRM$} 
 & \Block{1-1}{\xmark} & \Block{1-1}{\xmark} & \Block{1-1}{\; \checkmark\,${}^{\textcolor{blue}{\dagger}}$} & \Block{1-1}{\xmark} & \Block{1-1}{\xmark} & \cite{tran2021smg}\\ 
& \Block[borders={bottom,tikz=densely dotted}]{1-1}{\xmark} & \Block[borders={bottom,tikz=densely dotted}]{1-1}{\xmark} & \Block[borders={bottom,tikz=densely dotted}]{1-1}{\xmark} & \Block[borders={bottom,tikz=densely dotted}]{1-1}{\; \checkmark\,${}^{\textcolor{blue}{\ddagger}}$} & \Block[borders={bottom,tikz=densely dotted}]{1-1}{\xmark} & \Block[borders={bottom,tikz=densely dotted}]{1-1}{\cite{tseng1998incremental}}\\
 & \Block{1-1}{\checkmark} & \Block{1-1}{\checkmark} & \Block{1-1}{Thm. \ref{thm:complexity}} & \Block{1-1}{Thm. \ref{thm:global_convergence}} & \Block{1-1}{Thm. \ref{thm:iter_convergence}}& Ours\\ \Hline
\bottomrule
\end{NiceTabular}
}
\vspace{1ex}
\caption{Comparison of convergence guarantees for $\RR$ and its momentum variants for the case $\lambda = 0$.
\endgraf
\setlength{\parindent}{1ex} \setlength{\baselineskip}{11pt}
\noindent \hspace{1pt} ${}^{\textcolor{blue}{*}}$ {\footnotesize The momentum term of $\SMG$ (proposed in \cite{tran2021smg}) is fixed within an outer loop (epoch). This method is different from the stochastic momentum methods used in practice.}\\
\hspace*{1pt} ${}^{\textcolor{blue}{\dagger}}$ {\footnotesize To achieve arbitrary accuracies, this requires $\beta$ to be close to $0$ (see Table~\ref{table:super-nice} and Appendix \ref{app:table}).}\\
\hspace*{1pt} ${}^{\textcolor{blue}{\ddagger}}$ {\footnotesize In \cite[Proposition 3.4]{tseng1998incremental}, the gradients $\nabla f_i$ are assumed to be bounded on a certain level set and a line search-type strategy is used on a predefined subsequence $\{\ell_k\}_k$ of iterates; based on  $\beta^n < \frac12$; convergence ``$\|\nabla f(x^k)\| \to 0$'' only holds on the subsequence $\{\ell_k\}_k$.} 
 \endgraf}
\vspace{-3mm}
\label{table:contribution}
\end{table}

\subsection{Contributions} Motivated by the practical relevance of random reshuffling with momentum, this work aims to address some of the current theoretical gaps and provides an in-depth analysis of $\RRM$ in the nonconvex setting under mild assumptions. Our key contributions are outlined below. An additional overview and comparison is given in \Cref{table:contribution}. \\[1mm]
\noindent\emph{Iteration Complexity.} Under the assumption that each $f_i$ is Lipschitz smooth and bounded from below, we establish the bounds $\min_{k=1,\ldots,T} \|\nabla f(x^k)\|^2=\cO((1-\beta^m)^{-1}T^{-2/3})$ and
\begin{equation} \label{eq:wuhu} {\min}_{k=1,\ldots,T} \; \Exp[\|\nabla f(x^k)\|^2] = \cO((1-\beta^m)^{-1}n^{-1/3}T^{-2/3}), \quad m = n/b, \end{equation} 
for $\RRM$. 
Here, $b$ denotes the mini-batch size and the in-expectation result \eqref{eq:wuhu} is obtained when a uniform sampling scheme is used. The derived complexity bounds are valid for arbitrary choices of the momentum parameters $\beta\in[0,1)$ and $\lambda \in [0, \frac{\beta}{1-\beta}]$.
In contrast to \eqref{eq:tran-complexity}, the obtained complexity results indicate that the iterates generated by $\RRM$ can approach the set of stationary points if suitable step size schemes are used. Our bounds eliminate the problematic term depending on $\beta^n$ in \eqref{eq:tran-complexity} and improve the existing complexity results for $\RRHB$ by a factor of $n^{-1/3}$. \\[1mm] 
\noindent\emph{Global Convergence.} We establish an asymptotic global convergence result for $\RRM$. In particular, if the step sizes $\{\alpha_k\}_k$ diminish and satisfy $\sum_{k=1}^\infty \alpha_k^3 < \infty$, we show $\|\nabla f(x^k)\| \to 0$, $k \to \infty$, and hence every accumulation point of the sequence of iterates $\{x^k\}_k$ corresponds to a stationary point of $f$. These findings strengthen the convergence results of Tseng \cite{tseng1998incremental} for $\IGM$. Our theoretical framework allows \emph{arbitrary permutations}, thus encompassing $\IGM$ \cite{tseng1998incremental} as a special case without requiring bounded gradient assumptions. \\[1mm]
\noindent\emph{Iterate Convergence.} When the function $f$ is definable in an o-minimal structure, we establish iterate convergence of $\RRM$, i.e., the entire iterate sequence $\{x^k\}_k$ converges to a stationary point $x^*$ of $f$. This result is shown without invoking standard assumptions such as convexity or coercivity of $f$ or a priori boundedness of $\{x^k\}_k$ (as used in, e.g., \cite{li2023convergence,JMLR:v26:24-1243}). Iterate convergence constitutes a stronger notion of convergence---it eliminates potential iterate oscillations and guarantees \emph{last-iterate} convergence.

\subsection{Notations}
 We use $\N$, $\R$, $\R_+$, $\R_{++}$, and $\R^d$ to denote the set of natural numbers, the real line, the set of nonnegative real numbers, the set of positive real numbers, and the $d$-dimensional Euclidean space, respectively. For any integer $q\in\N$, we set $[q]:=\{1,\ldots,q\}$. 
Unless stated otherwise, $\langle \cdot,\cdot\rangle$ and $\|\cdot\|$ denote the Euclidean inner product and norm. For a vector $v=(v_1,\ldots,v_t)^\top\in\R^t$, we use the standard notation $\|v\|_1:={\sum}_{i=1}^t |v_i|$ and $\|v\|_\infty:=\max_{i=1,\ldots,t}|v_i|$.
For a nonempty set $S\subseteq\Rn$, we write $\dist(x,S):=\inf_{y\in S}\|x-y\|$. For a differentiable function $f:\Rn\to\R$, its set of stationary points is given by $\crit(f):=\{x\in\Rn:\nabla f(x)=0\}$. 
Finally, we assume that there is a sufficiently rich probability space $(\Omega, \mathcal F, \Prob)$ that can describe the stochastic components of $\RRM$ (i.e., the random permutations $\{\pi^k\}_k$) in a unified way. We use $\Exp$ to denote the associated expectation.
\par

\section{Algorithm and assumptions}
\label{sec:convergence analysis}
Mini-batches are frequently used in the training of large-scale models to improve and control stochastic approximation errors. In \Cref{algo:rrm}, we formally introduce a general mini-batch variant of the random reshuffling method with momentum ($\RRM$). For ease of exposition, we assume that the batch size $b\in\N$ is fixed and it holds that $m:=n/b\in\N$.
In addition, we define the mini-batch gradient mapping for each inner iteration $i\in[m]$ and epoch $k\geq 1$ as follows: 
\begin{equation}\label{eq:def-G}
\cG^k_i(x) := \frac1b{\sum}_{j=(i-1)b+1}^{ib} \nabla {f}_{\pi^{k}_{j}}(x).
\end{equation}
Thus, the direction $d_i^k$---computed in step 6 of \Cref{algo:rrm}--- satisfies $d_i^k = \cG^k_i(\nes_i^k)$.

\begin{algorithm2e}[t]
\caption{Random reshuffling with momentum ($\RRM$)}
\label[algorithm]{algo:rrm}
\textbf{Input:} $x^1 = \tilde{x}^1 \in \mathbb{R}^n$, step sizes $\{\alpha_k\}_k$, batch size $b \in \mathbb{N}$, $m = n/b \in \mathbb{N}$, $\beta \in [0,1)$, $\lambda \in [0, \frac{\beta}{1-\beta}]$. \\
\For{$k = 1, 2, \ldots$}{\vspace{1mm}
     Set $y^k_0 = \tilde{x}^k$ and $y^k_1 = x^k$ and generate a permutation $\pi^k=(\pi^k_1,\dots,\pi_n^k)$ of $[n]$\\
    \For{$i = 1, 2, \ldots, m$}{
    $\nes_i^k = y_i^k + \lambda(y_i^k - y_{i-1}^k)$ \hspace{2.55cm} \tcp{extrapolation step}\vspace{1mm}
    $d_i^k = b^{-1} \sum_{j=(i-1)b+1}^{ib} \nabla f_{\pi^k_j}(\nes_i^k)$ \hspace{1.2cm} \tcp{stoch.\ mini-batch gradient}\vspace{1mm}
    $y_{i+1}^k = y_i^k - \alpha_k d_i^k + \beta(y_i^k - y_{i-1}^k)$ \hspace{.9cm} \tcp{momentum step}\vspace{1mm}
    }
    Set $\tilde{x}^{k+1} = y^k_m$ and $x^{k+1} = y_{m+1}^k$}
\end{algorithm2e}

In the context of machine learning, the outer loop (indexed with $k$) corresponds to the overall number of epochs. Each component function typically represents a loss function associated with an individual data point. In each epoch $k$, the permutation $\pi^k = (\pi_1^k,\dots,\pi_n^k)$ shuffles the data points $[n] = \{1,\dots,n\}$, and the algorithm computes stochastic gradients according to the shuffled order $\pi^k$ of the data points. Therefore, each data point is visited sequentially once per epoch before a new random permutation is generated, and the mini-batches in \Cref{algo:rrm} are formed by consecutive blocks of this shuffled order.

This epoch-wise shuffling strategy is precisely the \emph{without-replacement} sampling, which is standard in practical implementations of stochastic optimization for large-scale learning problems \cite{bottou2012,tseng1998incremental,tensorflow2015,paszke2019pytorch}. Different from the classical analysis of with-replacement sampling schemes, the resulting stochastic gradients are generally biased and dependent across the inner iterations of an epoch. This is one of the main technical difficulties in the study of shuffling methods \cite{gurbu2019,mishchenko2020random,nguyen2021unified}.

As mentioned, $\RRM$ can be viewed as a generalization of popular stochastic momentum algorithms. In the case $\lambda=0$, $\RRM$ corresponds to the $\SGD$ optimizer with the ``\emph{momentum}'' option enabled, cf.\ TensorFlow \cite{tensorflow2015}. When $\lambda =\beta$, $\RRM$ corresponds to the $\SGD$ optimizer with both ``\emph{momentum}'' and ``\emph{nesterov}'' options enabled. If $\lambda = \beta = 0$, then $\RRM$ reduces to $\RR$.

In the following, we outline our main assumptions for the convergence analysis of $\RRM$. 
\begin{assumption} 
	We consider the following conditions:
	\begin{enumerate}[label=\textup{\textrm{(A.\alph*)}},topsep=1ex,itemsep=1ex,partopsep=0ex,leftmargin=8ex]
		\item \label{A1} Each $f_i$, $i \in [n]$, is $\sL$-smooth and bounded from below by $\bar f$. 
        \item \label{A2} The sequence $\{\alpha_k\}_k$ is non-increasing. 
        \item \label{S1} The permutations $\{\pi^k\}_k$ are sampled independently (for each $k$) and uniformly without replacement from $[n]$.
	\end{enumerate}
\end{assumption}

Lipschitz continuity of the gradients $\nabla f_i$, $i \in [n]$, is a common and mild assumption, see \cite{bottou2018optimization,mishchenko2020random,nesterov2018lectures}. Moreover, by \ref{A1} and \eqref{eq:def-G}, the mappings $\cG^k_i$, $i \in [m]$, $k \geq 1$, are $\sL$-continuous.

Condition \ref{A1} also provides a useful upper bound on $\|\nabla f_i(\cdot)\|$, \cite{mishchenko2020random,li2023convergence}; it holds that
\begin{equation}
	\label{eq:Lip-bound}
	\|\nabla f_i(x)\|^2 \leq 2\sL [f_i(x)-\bar f],\quad \forall \; i\in [n].
\end{equation}

\section{Challenges and basic properties}
We now describe the main challenges in studying $\RRM$ and the fundamental distinctions from the updates of $\RR$ and other momentum methods. Our proposed methodological approaches and key properties are presented in \Cref{proxy iterates and Lyapunov sequence}.

\subsection{Challenges and motivation}
\label{subsec:challenges} Expanding the algorithmic update of $\RRM$, we can obtain the following expression for $x^{k+1}-x^k$ (for simplicity, we consider the batch size $b=1$):
\begin{equation*}
x^{k+1} - x^k = - \alpha_k\,{\sum}_{t=1}^n \frac{1-\beta^{n-t+1}}{1-\beta} \cdot \nabla f_{\pi_t^k}(\nes_t^k) + \frac{\beta(1-\beta^n)}{1-\beta}\cdot (x^k-\tilde x^k),
\end{equation*}
(cf. \cref{lemma:update} with $i = m = n$). This reveals several complications that render the standard $\RR$-type and momentum methods' analyses inadequate:
\begin{itemize}[leftmargin=5ex]
    \item \emph{Unequal gradient weighting.} Random reshuffling updates $\{x^k\}_k$ via:
\begin{equation*}
x^{k+1} - x^k = - \alpha_k {\sum}_{t=1}^n \nabla f_{\pi_t^k}(y_t^k)\quad \text{where} \quad y_t^k=x^k - \alpha_k {\sum}_{i=1}^{t-1} \nabla f_{\pi_i^k}(y_i^k).
\end{equation*}
Standard analyses express $\RR$ as a gradient descent method with errors \cite{gurbu2019,mishchenko2020random,nguyen2021unified,li2023convergence}. This is enabled by a structural property: each stochastic gradient $\nabla f_{\pi_t^k}(y_t^k)$ receives \emph{equal weight} and we have $\sum_{t=1}^n\nabla f_{\pi_t^k}(x)=n\nabla f(x)$. Hence, the $\RR$ update can be written as:
\[ x^{k+1} - x^k = -n\alpha_k \nabla f(x^k) + e^k \quad \text{where} \quad e^k:=\alpha_k {\sum}_{t=1}^n [\nabla f_{\pi_t^k}(y_t^k) - \nabla f_{\pi_t^k}(x^k)]. \]
The term $e^k$ aggregates the stochastic errors, and $\|e^k\|^2$ is typically of the order $\cO(n^3\alpha_k^3)$, cf.\ \cite[Lemma 3.2]{li2023convergence}. However, the momentum mechanism in $\RRM$ interferes with this elegant decomposition. The coefficients $\frac{1-\beta^{n-t+1}}{1-\beta}$, $t \in [n]$, introduce \emph{non-uniform weights} and the $\RRM$ update cannot be directly expressed as a gradient descent step plus controllable errors. 
\item \emph{Structural difference from classical momentum.} Traditional momentum methods (including $\SGD$-based momentum methods) admit updates of the form 
\[x^{k+1} - x^k = -\alpha_k d^k + \beta (x^k - x^{k-1})\quad \text{for some $\beta \in [0,1)$ and a direction $d^k$.}\] 
This structure is crucial as it enables the following standard steps: by introducing proxy iterates $z^k:=(x^{k+1}-\beta x^k)/(1-\beta)$, the momentum recursion can be transformed into the tractable iteration $z^{k+1}=z^k-\alpha_k d^k/(1-\beta)$, for which well-established convergence techniques are available and applicable \cite{ghadimi2015globalheavyball,yu2019linear,liu2020improved,qiu2024convergence}. By contrast, $\RRM$'s momentum update involves the term $x^k - \tilde{x}^k$, where $\tilde{x}^k$ represents the \emph{second-to-last} inner iterate from the previous epoch rather than $x^{k-1}$. As a result, the existing analysis strategies, \cite{ghadimi2015globalheavyball,liu2020improved,yu2019linear,qiu2024convergence}, cannot be directly applied to $\RRM$. The key issue is that $\tilde{x}^k$ depends on the entire sequence of inner updates within epoch $k-1$, which creates a complex dependence on all intermediate gradient evaluations. This more intricate coupling currently cannot be captured by the standard proxy iterate transformation. 
\item \emph{Biased gradient evaluations and extrapolation.} It is well-known that stochastic gradients generated through sampling without replacement are \emph{biased estimates} of the true gradient, i.e., we generally have $\mathbb{E}[\nabla f_{\pi_t^k}(x)|x] \neq \nabla f(x)$, cf.\ \cite{gurbu2019,mishchenko2020random}. Moreover, in our setting, the stochastic gradients are evaluated at the extrapolated points $\nes_t^k = y_t^k + \lambda(y_t^k - y_{t-1}^k)$ rather than at the current iterates $y_t^k$. This extrapolation introduces additional bias and creates momentum-dependent dynamics in the stochastic gradient estimates. 
\end{itemize}

These differences necessitate a more nuanced analysis to handle the interplay between momentum dynamics, unequal weighting, and extrapolation---challenges that are currently not addressed in existing $\RR$ and/or momentum method analyses \cite{ghadimi2015globalheavyball,gurbu2019,liu2020improved,mishchenko2020random,nguyen2021unified,qiu2024convergence,yu2019linear}.

\subsection{Proxy iterates, Lyapunov sequence, and descent-type property}\label{proxy iterates and Lyapunov sequence}

To handle the challenges posed by the non-standard momentum structure of $\RRM$ and to disentangle the different coupled terms, we introduce the following proxy iterates.

\begin{definition}[Proxy iterates]
The proxy iterates are defined by
\begin{equation} \label{eq:def-z}
z^k := \frac{1}{1-\beta} \cdot x^k - \frac{\beta}{1-\beta} \cdot  \tilde x^k.
\end{equation}
\end{definition}

Applying the update rule of $\RRM$ and invoking \eqref{eq:def-z}, we can infer
\begin{equation} \label{eq:zk-works}
\begin{aligned} 
    (1-\beta) z^{k+1} &= x^{k+1} - \beta \tilde x^{k+1} = y_{m+1}^k - \beta y_m^k = y_{m}^k - \beta y_{m-1}^k - \alpha_k d_{m}^k =  \, \dots \\
    &= y_1^k - \beta y_0^k - \alpha_k {\sum}_{i=1}^m d^k_i  = x^k - \beta \tilde x^k - \alpha_k {\sum}_{i=1}^m d^k_i  = (1-\beta) z^k - \alpha_k {\sum}_{i=1}^m d^k_i.
\end{aligned}
\end{equation}
Thus, the proxy iterates $\{z^k\}_k$ manage to capture the momentum-based dependencies in $\RRM$ similar to the classical auxiliary variables used in the analysis of deterministic and stochastic gradient descent-type momentum methods. To facilitate our convergence analysis, we introduce a Lyapunov sequence that combines the proxy function value $f(z^k)$ and the distance term $\|z^k-x^k\|^2$.


\begin{definition}[Lyapunov sequence]
\label{def:L}
The Lyapunov sequence is defined by
\begin{equation}
	\label{eq:define-L}
	\lya_{k}:= [f(z^{k}) - \bar f] + \sH \alpha_{k}\|z^{k} - x^{k}\|^2\quad \text{where} \quad \sH:=\frac{9\sL^2m}{8(1-\beta)(1-\beta^m)}.
\end{equation}
\end{definition}

For later use, let us further define
\[ \sD:= \frac{1}{(1-\beta^m)^2}\Big[\frac{\sL}{1-\beta} \Big]^3 \qquad \text{and}\qquad \Delta(t):=\lya_1 \cdot \exp(\sD t). \]

Now, we are ready to establish an approximate descent property for $\{\lya_k\}_k$.
\begin{proposition}[Approximate descent property] \label{prop:approximate descent}
Assume \ref{A1}--\ref{A2} hold. Let $\{x^k\}_k$ be generated by $\RRM$ with $\beta\in[0,1)$, $\lambda \in [0, \frac{\beta}{1-\beta}]$, and step sizes $\alpha_k \in (0,\frac{(1-\beta)(1-\beta^m)}{4\sL m}]$, and let $\{z^k\}_k$ be defined as in \eqref{eq:def-z}. Let $T\geq 1$ denote the total number of epochs. 
\begin{enumerate}[label=\textup{(\alph*)},topsep=1ex,itemsep=0ex,partopsep=0ex, leftmargin = 5ex]
\item For all $1\leq k \leq T$, it holds that $\lya_{k} \leq \Delta ({\sum}_{i=1}^T  m^3\alpha_i^3)$ and 
\begin{equation*}
\begin{aligned} \lya_{k+1} & \leq \lya_k +  \Delta\Big(m^3{\sum}_{i=1}^T  \alpha_i^3\Big) \sD m^3\alpha_k^3 \\ & \hspace{4ex} - \frac{1-\beta}{4m\alpha_k}\|z^{k+1} - z^{k}\|^2 - \frac{m\alpha_k}{4(1-\beta)}\Big[\frac14\|\nabla f(x^k)\|^2 +\frac15 \|\nabla f(z^k)\|^2\Big]. \end{aligned}    
\end{equation*}
\item If, in addition, \ref{S1} is satisfied, then for all $1\leq k \leq T$, we have 
\[
    \Exp[\lya_{k+1}]\leq \Exp[\lya_k] - \frac{m\alpha_k}{16(1-\beta)} \Exp[\|\nabla f(x^k)\|^2] +  \Delta\Big(\frac{m^2}{b}{\sum}_{i=1}^T  \alpha_i^3\Big) \cdot \frac{\sD m^2}{b}\alpha_k^3.
\]
\end{enumerate}
\end{proposition}

\begin{proof}[Proof sketch.]
The proof of \Cref{prop:approximate descent} relies on several preparatory estimates derived in \Cref{sec:key-lemmas}, and the full proof is presented in \Cref{subsec:proof approximate descent}. We now briefly sketch some of the core ideas. Using the Lipschitz continuity of $\nabla f$ and algorithmic bounds, we can show:
\begin{equation} \label{eq:roadmap}
    \begin{aligned}
	f(z^{k+1}) - \bar f & \leq f(z^k) - \bar f - p_1 \alpha_k \|\nabla f(z^k)\|^2 + p_2\alpha_k {\sum}_{i=1}^m\|\nes_i^k-z^k\|^2 \\ 
    & \leq [1+q_1\alpha_k^3][f(z^k)-\bar f] - q_2\alpha_k \|\nabla f(z^k)\|^2 + q_3 \alpha_k \|x^k-z^k\|^2,
\end{aligned}
\end{equation}
where $p_1,p_2,q_1,q_2,q_3 >0$ are suitable constants (cf.\ \Cref{lemma:sum y-z} and \Cref{subsec:proof approximate descent}). Furthermore, it is possible to derive a recursive bound for the error term $\|x^k-z^k\|^2$: 
\[\|x^{k+1} - z^{k+1}\|^2 \leq \eta \|x^k-z^k\|^2 + s_1 \alpha_k^2\|\nabla f(z^k)\|^2 + s_2 \alpha_k^2[f(z^k) - \bar f], \]
where $\eta\in(0,1)$ and $s_1, s_2 >0$ (see \Cref{lemma:sum z-x}). Due to $\eta < 1$, these two estimates can be combined to balance the term $\|x^k-z^k\|^2$ in \eqref{eq:roadmap}. This motivates the definition of the Lyapunov sequence $\{\lya_k\}_k$ and enables the descent properties stated in \Cref{prop:approximate descent}. Our analysis and the sketched bounds rely heavily on the proxy iterates $\{z^k\}_k$ and on the behavior of the proxy function values $\{f(z^k)\}_k$ (rather than $\{f(x^k)\}_k$).
\end{proof}
\section{Iteration complexity}\label{sec:comp}
Leveraging the approximate descent property, we can now establish new iteration complexity results for $\RRM$. More detailed comparisons with other existing complexity bounds are provided in \cref{table:super-nice}; see also \Cref{rem:discussion} for further discussions.

\begin{theorem}[Complexity bounds]
\label{thm:complexity}
We assume that \ref{A1}--\ref{A2} hold. Let $\{x^k\}_k$ be generated by $\RRM$ with $\alpha_k \in (0, \frac{(1-\beta)(1-\beta^m)}{4\sL m}]$, $\beta\in[0,1)$, $\lambda\in[0,\frac{\beta}{1-\beta}]$. Let $T\geq 1$ denote the total number of epochs.
\begin{enumerate}[label=\textup{(\alph*)},topsep=1ex,itemsep=1ex,partopsep=0ex, leftmargin = 5ex]
\item If $\sum_{k=1}^T \alpha_k^3 \leq \frac{1}{\sD m^3}$, then $\min_{k=1, \ldots, T} \|\nabla f(x^k)\|^2 \leq \frac{1+3\sD m^3 \sum_{k=1}^T \alpha_k^3}{m\sum_{k=1}^T \alpha_k} \cdot 16(1-\beta) [f(x^1) - \bar f]$.
\item If, in addition, \ref{S1} is satisfied and $\sum_{k=1}^T \alpha_k^3 \leq \frac{b}{\sD m^2}$, we then have
 \[
 \min_{k=1, \ldots, T} \Exp [\|\nabla f(x^k)\|^2] \leq \frac{1+3\sD m^2 b^{-1} \sum_{k=1}^T \alpha_k^3}{m\sum_{k=1}^T \alpha_k} \cdot 16(1-\beta) [f(x^1) - \bar f].  
 \]
\end{enumerate}
\end{theorem}

\begin{proof}
Under the stated step size condition, \Cref{prop:approximate descent} is applicable. Summing the recursion in \Cref{prop:approximate descent} (a) for $k=1,\dots,T$ and recalling $\Delta(t):=\lya_1 \cdot \exp(\sD t)$, we have
\begin{equation*}
    \begin{aligned}
      \frac{m}{16(1-\beta)} \; {\sum}_{k=1}^T \alpha_k \|\nabla f(x^k)\|^2  &\leq \lya_1 + \Delta\Big(m^3{\sum}_{i=1}^T\alpha_i^3\Big) \cdot \sD m^3{\sum}_{i=1}^T \alpha_i^3
      \\& = \Big[ 1 + \exp\Big(\sD m^3{\sum}_{i=1}^T \alpha_i^3 \Big) \cdot \sD m^3 {\sum}_{i=1}^T \alpha_i^3 \Big] \cdot \lya_1.
\end{aligned}
\end{equation*}
Using $\sum_{i=1}^T \alpha_i^3 \leq \frac{1}{\sD m^3}$, $\exp(1) \leq 3$, and $\lya_1 = f(x^1)-\bar f$, we obtain 
\begin{equation*}
    \begin{aligned}
        \min_{k=1, \ldots, T} \|\nabla f(x^k)\|^2 &\leq \Big({\sum}_{k=1}^T \alpha_k \|\nabla f(x^k)\|^2\Big)/\Big({\sum}_{k=1}^T \alpha_k\Big)\\ &\leq \frac{1+3\sD m^3 \sum_{k=1}^T \alpha_k^3}{m\sum_{k=1}^T \alpha_k} \cdot 16(1-\beta) [f(x^1) - \bar f].
    \end{aligned}
\end{equation*}
When \ref{S1} is satisfied, then the estimate in \cref{prop:approximate descent} (b) is applicable. Summing this estimate for $k=1,\dots,T$, it follows
\[
\frac{m}{16(1-\beta)} {\sum}_{k=1}^T \alpha_k \Exp[\|\nabla f(x^k)\|^2]  \leq  \Big[ 1 + \exp\Big(\frac{\sD m^2}{b}{\sum}_{i=1}^T\alpha_i^3 \Big) \cdot \frac{\sD m^2}{b} {\sum}_{i=1}^T \alpha_i^3 \Big] \lya_1.
\]
Using $\sum_{i=1}^T \alpha_i^3 \leq \frac{b}{\sD m^2}$, $\exp(1) \leq 3$, and $\lya_1 = f(x^1)-\bar f$, we can now directly repeat the steps from part (a) to complete the proof.
\end{proof}
\begin{corollary}[Complexity bounds: constant step sizes]
\label[corollary]{coro:complexity-constant}
Let $\{x^k\}_k$ be generated by $\RRM$ with $\{\alpha_k\}_k \subset \R_{++}$, $\beta\in[0,1)$, $\lambda\in[0,\frac{\beta}{1-\beta}]$ and let $T\in\N$ be given. Assume that \ref{A1} holds.
\begin{enumerate}[label=\textup{(\alph*)},topsep=1ex,itemsep=1ex,partopsep=0ex, leftmargin = 5ex]
\item Suppose that $\alpha_k = \frac{(1-\beta)(1-\beta^m)\alpha}{\sL m}$, $\alpha \leq \min\{\frac{1}{4}, \frac{1}{[(1-\beta^m)T]^{1/3}}\}$ for all $k$. Then, it holds that
\[
 \min_{k=1,\ldots,T} \|\nabla f(x^k)\|^2 \leq \Big[ \frac{1}{(1-\beta^m)\alpha T} + {3 \alpha^2} \Big] \cdot 16\sL [f(x^1) - \bar f].
\]
\item In addition, under \ref{S1} and if $\alpha_k = \frac{(1-\beta)(1-\beta^m)\alpha}{\sL m}$, $\alpha \leq \min\{\frac{1}{4}, [\frac{n}{(1-\beta^m)T}]^{1/3} \}$, we have
\[
 \min_{k=1,\ldots,T} \Exp[\|\nabla f(x^k)\|^2] \leq \Big[ \frac{1}{(1-\beta^m)\alpha T} + \frac{3 \alpha^2}{n} \Big] \cdot 16 \sL [f(x^1) - \bar f].
\]
\end{enumerate}
\end{corollary}
\begin{proof}
Assumption \ref{A2} clearly holds and we have $\alpha_k \leq \frac{(1-\beta)(1-\beta^m)}{4\sL m}$ and $\sD m^3 \sum_{i=1}^T \alpha_i^3 = (1-\beta^m)T \alpha^3 \leq 1$. Hence, all requirements in \Cref{thm:complexity} (a) are satisfied and substituting $\alpha_k$ into \Cref{thm:complexity} (a) establishes part (a). The proof of part (b) follows similarly. 
\end{proof}

\begin{remark}[Discussion of the complexity results] \label[remark]{rem:discussion}
\;
\begin{itemize}[leftmargin=5ex]
\item Except for the momentum-based factor $\frac{1}{1-\beta^m}$, the bounds in \Cref{coro:complexity-constant} align with the ones obtained in \cite{mishchenko2020random,nguyen2021unified} for $\RR$ and incremental gradient methods. The additional dependence on the momentum parameter $\beta$ is typical for stochastic momentum methods and can also be observed in other works \cite{liu2020improved,tran2021smg,yang2016unified}. In practice, the momentum parameter is often chosen as $\beta \approx 0.9$, which is the default value in PyTorch \cite{paszke2019pytorch} and TensorFlow \cite{tensorflow2015}. In this case, if $m$ is ``large''---say $m \geq 50$---we have $\frac{1}{1-\beta^m} \leq 1.005$; this is close to $1$. 

\item Setting $\alpha=[n/((1-\beta^m)T)]^{1/3}$ in \Cref{coro:complexity-constant} (b), the bound reduces to 
\begin{equation*} 
\min_{k=1,\ldots,T} \Exp[\|\nabla f(x^k)\|^2] = \cO\Big(\frac{1}{(1-\beta^m)^{2/3}T^{2/3} n^{1/3}}\Big)\end{equation*}
provided that $T \geq 64n/(1-\beta^m)$. This allows us to recover the in-expectation complexity results derived in \cite{mishchenko2020random,nguyen2021unified} for $\RR$. Similarly, choosing $\alpha = [(1-\beta^m)T]^{-1/3}$ in \Cref{coro:complexity-constant} (a), we obtain $\min_{k=1,\dots,T} \|\nabla f(x^k)\|^2 = \mathcal O([(1-\beta^m)T]^{-2/3})$ if $T \geq 64/(1-\beta^m)$. Thus, by exploiting the randomness of the uniform sampling scheme, the complexity results for $\RRM$ can be improved by a factor of $n^{1/3}$; see \cite{mishchenko2020random,nguyen2021unified} for related discussions for $\RR$. As a consequence, to reach an $\varepsilon$-accurate solution---$\min_{k=1,\dots, T} \Exp[\|\nabla f(x^k)\|] \leq \varepsilon$---$\RRM$ requires $T = \cO((1-\beta^m)^{-1}n^{-1/2}\varepsilon^{-3})$ epochs. Since there are $n$ gradient evaluations per epoch, the overall complexity of $\RRM$ in terms of gradient evaluations is given by
\begin{equation*}
    \cO((1-\beta^m)^{-1}\sqrt{n}\varepsilon^{-3}) = \cO(\sqrt{n}\varepsilon^{-3}).
\end{equation*}
Thus, compared to the standard $\cO(\varepsilon^{-4})$-complexity of $\SGDM$,  $\RRM$ achieves a better complexity bound whenever $n \lesssim \varepsilon^{-2}$. More detailed comparisons are provided in \Cref{table:super-nice}.
\item To the best of our knowledge, \cref{thm:complexity} provides the first iteration complexity for $\RRM$ in the nonconvex case \emph{without requiring bounded (stochastic) gradients} or a \emph{vanishing} momentum parameter $\beta$. Specifically, our results are valid for arbitrary choices of $\beta\in[0,1)$ and $\lambda \in [0, \frac{\beta}{1-\beta}]$. This flexibility aligns well with common practices and settings.
\end{itemize}
\end{remark}

\setcounter{table}{1}
\renewcommand{\tablename}{Table}
\begin{table}[t]
\centering
{\footnotesize
\setlength{\tabcolsep}{5pt}
\NiceMatrixOptions{cell-space-limits=1pt}
\begin{NiceTabular}{|c|p{2.55cm}p{2.2cm}|p{6.2cm}|c|}%
 [ 
   code-before = 
    \rectanglecolor{lavender!30}{4-2}{4-5}
    \rectanglecolor{lavender!30}{6-2}{6-5}
    \rectanglecolor{lavender!30}{8-2}{8-5}
 ]
\toprule
\Block{3-1}{\textbf{Alg.}} & \Block[c]{1-3}{\textbf{Iteration complexity: nonconvex setting}} & & & \Block{3-1}{\textbf{Ref.}}  \\ \Hline
& \Block{2-1}{no variance cond. $\&$ unbound. grad.} 
& \Block{2-1}{$\beta\in[0,1)$ is free} & \Block{1-1}{$\#$ of grad. evaluations to reach} & \\
& 
&  & \Block{1-1}{$\min_{k=1,\ldots,T}\; \Exp[\|\nabla f(x^k)\|] \leq \varepsilon$} & \\[1mm] \Hline
\Block{1-1}{$\RR$} & \Block{1-1}{\checkmark}  & \Block{1-1}{--} & \Block{1-1}{$\varepsilon^{-3}\cdot \sL\sqrt{n}\max\{\varepsilon \sqrt{n},\sqrt{\sA}+\sB\}$\,${}^{\textcolor{blue}{\text{(a)}}}$}  & \Block{1-1}{\cite{mishchenko2020random}} \\
 \Hline
\Block{1-1}{$\SGDM$} & \Block{1-1}{\xmark} & \Block{1-1}{\checkmark} & \Block{1-1}{$\varepsilon^{-4}\cdot \sL\max\{\frac{\varepsilon^2}{1-\beta},\sB^2\}$\,${}^{\textcolor{blue}{\text{(b)}}}$}  & \Block{1-1}{\cite{liu2020improved}} \\[1mm]
 \Hline
\Block{1-1}{$\SMG$} & \Block{1-1}{\checkmark} & \Block{1-1}{\checkmark} & \Block{1-1}{$\varepsilon^{-3}\cdot\frac{\sL\sqrt{n}}{(1-\beta)^{3/2}}\max\{\varepsilon\sqrt{\sA+n},\sB\}$\,${}^{\textcolor{blue}{\text{(c)}}}$} & \Block{1-1}{\cite{tran2021smg}} \\[1mm]
\Hline
\Block{2-1}{$\RRM$}  & \Block{1-1}{\xmark} & \Block{1-1}{\xmark} &  \Block{1-1}{$\varepsilon^{-3}\cdot \sL{n} (1+\frac{\varepsilon^2}{\sG^2})\max\{\varepsilon,\sG\}$\,${}^{\textcolor{blue}{\text{(d)}}}$} & \Block{1-1}{\cite{tran2021smg}} \\[1mm]
\Hline
 & \Block{1-1}{\checkmark} & \Block{1-1}{\checkmark} & \Block{1-1}{$\varepsilon^{-3}\cdot \frac{\sL\sqrt{n}}{1-\beta^m}\max\{\varepsilon\sqrt{n},\sqrt{\sL}\}$\,${}^{\textcolor{blue}{\text{(e)}}}$} & \Block{1-1}{Ours} \\
\bottomrule
\end{NiceTabular}
}
\vspace{1ex}
\caption{Comparison of complexity bounds for standard momentum-based and shuffling-type methods.}
\label{table:super-nice}
\end{table}

\paragraph{Details on \Cref{table:super-nice}}

In the second column, the icon ``\checkmark'' indicates that no particular variance assumption is required and that the (stochastic) gradients need not be bounded; in the third column, ``\checkmark'' indicates that the momentum parameter $\beta \in [0,1)$ can be chosen freely; and the fourth column reports the number of gradient evaluations $K = T$ for $\SGDM$ and $K = nT$ for $\RR$, $\SMG$, and $\RRM$ required to reach an $\varepsilon$-accurate solution satisfying $\min_{k=1,\dots, T} \Exp[\|\nabla f(x^k)\|] \leq \varepsilon$. Additional information and derivations are provided in \Cref{app:table}.
\begin{enumerate}[label={\textcolor{blue}{\textup{(\alph*)}}},topsep=1ex,itemsep=0ex,partopsep=0ex,leftmargin=5ex]
\item Based on the variance condition $\frac{1}{n} \sum_{i=1}^n \|\nabla f_i(x)-\nabla f(x)\|^2 \leq 2\sA[f(x)-\bar f]+\sB^2$. $\RR$ corresponds to $\RRM$ with $\beta = \lambda =0$.
\item Based on the condition $\frac{1}{n} \sum_{i=1}^n \|\nabla f_i(x)-\nabla f(x)\|^2 \leq \sB^2$.
\item The complexity result in \cite[Theorem 2]{tran2021smg} is based on the variance condition $\frac{1}{n} \sum_{i=1}^n \|\nabla f_i(x)-\nabla f(x)\|^2 \leq \sA\|\nabla f(x)\|^2+\sB^2$. The momentum term of $\SMG$ is fixed within an outer loop (epoch), which is different from the momentum mechanisms used in practice.
\item The bound in \cite[Theorem 3]{tran2021smg} applies to $\RRM$ with $\lambda =0$ and shuffle-once option $\pi^k \equiv \pi$, and is based on the bounded gradient assumption $\|\nabla f_i(x)\| \leq \sG$ for all $x$ and $i$ and sufficiently small momentum $\beta^n \lesssim \varepsilon^2$.
\item As each function $f_i$ is $\sL$-Lipschitz smooth, we have $\frac{1}{n}\sum_{i=1}^{n}\|\nabla f_i(x)-\nabla f(x)\|^2 \leq 2\sL[f(x)-\bar f] - \|\nabla f(x)\|^2 \leq 2\sL[f(x)-\bar f]$. Hence, for comparison, we may set $\sA = \sL$, $\sB = 0$ in \textcolor{blue}{\textup{(a)}} and $\sA = 0$, $\sB \sim \sqrt{\sL}$ in \textcolor{blue}{\textup{(c)}}.
\end{enumerate}
\par

Finally, we specialize \Cref{thm:complexity} to polynomial step sizes.

\begin{corollary}[Complexity bounds: polynomial step sizes]
\label{coro:complexity-polynomial}
Let condition \ref{A1} hold. Let $\{x^k\}_k$ be generated by $\RRM$ with $\{\alpha_k\}_k \subset \R_{++}$, $\beta\in[0,1)$, $\lambda\in[0,\frac{\beta}{1-\beta}]$ and let $T\in\N$ be given.
\begin{enumerate}[label=\textup{(\alph*)},topsep=1ex,itemsep=0ex,partopsep=0ex, leftmargin = 5ex]
\item Consider $\alpha_k = \frac{(1-\beta)(1-\beta^m)}{\sL m}\frac{\alpha}{k^\gamma} $ with $\gamma \in (\frac13, 1)$, $\alpha \leq \min\{\frac{1}{4}, [\frac{3\gamma - 1}{3\gamma(1-\beta^m)}]^{1/3} \}$. Then, we have
\[
 \min_{k=1,\ldots,T} \|\nabla f(x^k)\|^2 \leq \Big[ \frac{1}{(1-\beta^m)\alpha} +  \frac{9\gamma \alpha^2}{3\gamma-1} \Big] \cdot \frac{16\sL(1-\gamma) [f(x^1) - \bar f]}{(T+1)^{1-\gamma} - 1}.
\]
\item If \ref{S1} holds and $\alpha_k = \frac{(1-\beta)(1-\beta^m)}{\sL m}\frac{\alpha}{k^\gamma}$ with $\gamma \in (\frac13, 1)$, $\alpha \leq \min\{\frac{1}{4}, [\frac{(3\gamma - 1)n}{3\gamma(1-\beta^m)}]^{1/3}\}$, then:
\[
 \min_{k=1,\ldots,T} \Exp[\|\nabla f(x^k)\|^2] \leq \Big[ \frac{1}{(1-\beta^m)\alpha} + \frac{9\gamma \alpha^2}{(3\gamma-1)n} \Big] \cdot \frac{16\sL(1-\gamma) [f(x^1) - \bar f]}{(T+1)^{1-\gamma} - 1}.    
\]
\end{enumerate}
\end{corollary}
\begin{proof} Setting $\delta := \frac{(1-\beta)(1-\beta^m)}{\sL m}$ and using the integral test and $\gamma \in (\frac13,1)$, we have 
\[ \sD m^3 {\sum}_{k=1}^T \alpha_k^3 \leq \sD m^3 \delta^3 \alpha^3 \Big[1+\int_{1}^T \frac{1}{x^{3\gamma}}\,\mathrm{d}x\Big] \leq (1-\beta^m)\alpha^3 \cdot \frac{3\gamma}{3\gamma-1} \]
and $\sum_{k=1}^T \alpha_k \geq \frac{\delta\alpha}{1-\gamma}[(T+1)^{1-\gamma}-1]$. Hence, by assumption, \Cref{thm:complexity} (a) is applicable, which allows us to establish the desired complexity bound. Part (b) follows similarly.
\end{proof}

\begin{remark}[Optimal choice of the polynomial step sizes]
\label{rem:choice-gamma}

Corollary \ref{coro:complexity-polynomial} suggests an epoch-dependent choice of the polynomial step size parameters to achieve the optimal complexity. In particular, for $\gamma = \frac13$, we have $Dm^3\sum_{k=1}^T \alpha_k^3 \leq \frac{(1-\beta^m)\alpha^3}{n}[1+\log(T)]$. Hence, in the scenario $\alpha \sim [\frac{n}{(1-\beta^m)\log(T+1)}]^{1/3}$ and following the earlier derivations, we can obtain 
\[ \min_{k=1,\ldots,T} \Exp[\|\nabla f(x^k)\|^2]
    = \cO\Big(\frac{\log(T+1)^{1/3}}{(1-\beta^m)^{2/3}n^{1/3}(T+1)^{2/3}}\Big)
\]
provided that $T$ is sufficiently large. The dependence on $T$ can be removed by, e.g., considering more general step sizes of the form $\alpha_k = \frac{\alpha\delta}{(k\log(k))^\gamma}$ where $\gamma = \frac13$, $\delta = \frac{(1-\beta)(1-\beta^m)}{Lm}$, and $\alpha \in (0,\frac14]$. We will omit explicit computations for such choice here. 

\end{remark}

\section{Global convergence}
Due to the $\min$-operation, the bounds in Corollaries \ref{coro:complexity-constant} and \ref{coro:complexity-polynomial} do not directly imply $\|\nabla f(x^k)\| \to 0$ (when taking $T\to\infty$). 
    Let $\{x^k\}_k$ be generated by $\RRM$. We define the set of accumulation points of $\{x^k\}_k$ by
%
\begin{equation} \label{eq:limit point set}
\cA := \{ x \in \Rn:{\liminf}_{k\to\infty}\|x^k-x\|=0\}.
\end{equation}
In the following, we complement our non-asymptotic results and discuss the asymptotic convergence behavior of $\RRM$. 

\begin{theorem}[Global convergence] \label{thm:global_convergence}
	Let \ref{A1}--\ref{A2} hold and let $\{x^k\}_k$ be generated by $\RRM$ with $\beta\in[0,1)$, $\lambda \in [0, \frac{\beta}{1-\beta}]$, and step sizes $\{\alpha_k\}_k \subseteq \R_{++}$ satisfying 
 \begin{equation}\label{step-size-1}
   \alpha_k \leq \frac{(1-\beta)(1-\beta^m)}{4\sL m}, \quad  {\sum}_{k=1}^{\infty} \alpha_k = \infty,\quad \text{and} \quad {\sum}_{k=1}^{\infty}\alpha_k^3 < \infty.
 \end{equation}
	Furthermore, let $\{z^k\}_k$ be given as in \eqref{eq:def-z}. Then, the following statements are valid:
	\begin{enumerate}[label=\textup{(\alph*)},topsep=1ex,itemsep=0.5ex,partopsep=0ex, leftmargin = 5ex]
     \item We have $\sum_{k=1}^\infty \alpha_k \|\nabla f(x^k)\|^2 < \infty$ and $\min_{k=1,\dots,T} \|\nabla f(x^k)\|^2 = o\big(({\sum_{k=1}^T \alpha_k})^{-1}\big)$, $T \to \infty$.
	\item It holds that $\lim_{k\to\infty} \|\nabla f(x^k)\|=0$ and $\lim_{k\to\infty} \|\nabla f(z^k)\|=0$, i.e., every accumulation point of $\{x^k\}_k$ and $\{z^k\}_k$ is a stationary point of $f$.
	\item The sequences $\{f(x^k)\}_k$ and $\{f(z^k)\}_k$ converge to some constant $f^*\in\R$.
	\end{enumerate}
\end{theorem}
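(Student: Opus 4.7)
The plan is to build on the approximate descent property of Lemma~\ref{prop:descent-bound-Lk} together with the step-size conditions in~\eqref{step-size-1}. First, I would pass to the limit $T \to \infty$ in Lemma~\ref{prop:descent-bound-Lk}. Since $\sum_{k=1}^\infty \alpha_k^3 < \infty$, the exponential prefactor $\exp(\sum_{j=1}^T \sC_f(\beta,n)\alpha_{j-1}^3)$ is uniformly bounded by some constant $M > 0$ independent of $T$, so the descent inequality becomes
\begin{equation*}
\lya_{k+1} \leq \lya_k - \frac{n\alpha_k}{2(1-\beta)}\|\nabla f(x^k)\|^2 + M\,\sC_f(\beta,n)\,\alpha_{k-1}^3.
\end{equation*}
Summing from $k=1$ to $\infty$ and using $\lya_{k+1} \geq 0$, I would obtain two key consequences: (i) $\sup_k \lya_k < \infty$, which in turn gives $\sup_k f(z^k) < \infty$ and $\sup_k \theta_1^k \|x^k - x^{k-1}\|^2 < \infty$; and (ii) $\sum_{k=1}^\infty \alpha_k \|\nabla f(x^k)\|^2 < \infty$.

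Next, I would show that $\|x^{k+1} - x^k\| \to 0$ and $\|z^{k+1} - z^k\| \to 0$. From the $\RRM$ update, the inner-loop increments $d_i^k := y_{i+1}^k - y_i^k$ satisfy $d_i^k = -\alpha_k \nabla f(y_i^k;\pi_i^k) + \beta d_{i-1}^k$, so unfolding the recursion yields bounds of the form $\|d_i^k\| \lesssim \alpha_k \sum_{j=1}^i \beta^{i-j} \|\nabla f(y_j^k;\pi_j^k)\| + \beta^i \|d_0^k\|$. Combined with the growth inequality~\eqref{eq:Lip-bound}, Lipschitz smoothness of each $f(\cdot;i)$, and the uniform boundedness of $\lya_k$ (hence of $f(z^k)$) from Step~1, a discrete Gronwall-type argument in $i$ yields $\max_i \|y_i^k - x^k\| = \cO(\alpha_k)$ and thus $\|x^{k+1} - x^k\| = \cO(\alpha_k)$ and $\|z^{k+1} - z^k\| = \cO(\alpha_k)$. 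Both vanish since $\alpha_k \to 0$, and the same estimates imply $\|z^k - x^k\| = \cO(\alpha_k) \to 0$ via the identity~\eqref{eq:def-zk}.

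To upgrade~(ii) to the pointwise statement $\lim_k \|\nabla f(x^k)\| = 0$, I would invoke a classical Zangwill-style argument. Using $\sum_k \alpha_k = \infty$, (ii) forces $\liminf_k \|\nabla f(x^k)\| = 0$. If one had $\limsup_k \|\nabla f(x^k)\| \geq 2c > 0$, then infinitely many disjoint ``ascending intervals'' $[k_\ell, m_\ell]$ with $\|\nabla f(x^{k_\ell})\| \leq c$, $\|\nabla f(x^{m_\ell})\| \geq 2c$, and $\|\nabla f(x^k)\| \geq c$ in between would exist. Lipschitz continuity of $\nabla f$ combined with $\|x^{k+1} - x^k\| = \cO(\alpha_k)$ and the regularity $\alpha_k / \alpha_{k+1} \to 1$ forces $\sum_{k=k_\ell}^{m_\ell} \alpha_k \geq \delta$ for some $\delta > 0$ uniform in $\ell$; hence $\sum_k \alpha_k \|\nabla f(x^k)\|^2 \geq c^2 \sum_\ell \delta = \infty$, contradicting~(ii). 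The statement for $\{z^k\}_k$ then follows from $\|\nabla f(z^k) - \nabla f(x^k)\| \leq \sL \|z^k - x^k\| \to 0$.

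For part~(b), I would apply a deterministic version of the Robbins-Siegmund lemma to the displayed descent inequality: since the error term $M\sC_f(\beta,n)\alpha_{k-1}^3$ is summable and $\lya_k \geq 0$, the sequence $\{\lya_k\}_k$ converges to some limit. Combined with $\theta_1^k \|x^k - x^{k-1}\|^2 \to 0$ and $\theta_2^k \|y_1^{k-1} - y_0^{k-1}\|^2 \to 0$ (immediate from $\theta_j^k = \cO(\alpha_{k-1})$ and the $\cO(\alpha_{k-1})$ increment bounds of Step~2), this shows that $\{f(z^k)\}_k$ converges to some $f^* \in \R$. Continuity of $f$ together with $\|x^k - z^k\| \to 0$ then transfers the conclusion to $\{f(x^k)\}_k$ with the same limit. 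I expect the main obstacle to be the Zangwill-type step in part~(a): it must carefully orchestrate the Lipschitz inflation of the gradient, the vanishing one-step increments, and the mild step-size regularity $\alpha_k/\alpha_{k+1} \to 1$ in order to rule out persistent oscillations of $\|\nabla f(x^k)\|$.
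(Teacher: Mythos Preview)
Your plan is correct and would yield a valid proof, but it differs from the paper's argument in a few interesting respects.

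The paper does not work directly with the Lyapunov descent of \Cref{prop:descent-bound-Lk}. Instead, it first packages the $\cO(\alpha_k)$ estimates you sketch in Step~2 into \Cref{lem:boundednesss} (the paper's rigorous version of your ``Gronwall in $i$'' step), substitutes those into \Cref{prop-descent-z}\,(b), and obtains the cleaner inequality of \Cref{prop:asmy-bound}:
\[
f(z^{k+1}) \leq f(z^k) - \tfrac{n\alpha_k}{4(1-\beta)}\|\nabla f(z^k)\|^2 - \tfrac{1-\beta}{8n\alpha_k}\|z^{k+1}-z^k\|^2 + \hat\sG\alpha_k^3.
\]
Applying the supermartingale lemma to this recursion gives both $\sum_k \alpha_k\|\nabla f(z^k)\|^2 < \infty$ \emph{and} $\sum_k \alpha_k^{-1}\|z^{k+1}-z^k\|^2 < \infty$, together with convergence of $\{f(z^k)\}_k$. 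The contradiction argument is then run on $\{z^k\}$ rather than $\{x^k\}$, and in the opposite direction from yours: from $\|\nabla f(z^k)\|\geq\varepsilon$ on the interval $[t_j,\ell_j)$, the paper concludes $\nu_j := \sum_{k=t_j}^{\ell_j-1}\alpha_k \to 0$, and then Cauchy--Schwarz against $\sum_k \alpha_k^{-1}\|z^{k+1}-z^k\|^2 < \infty$ forces $\|z^{\ell_j}-z^{t_j}\|\to 0$, contradicting the Lipschitz bound $\varepsilon \leq \sL\|z^{\ell_j}-z^{t_j}\|$.

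What each buys: the paper's route exploits the extra $-\alpha_k^{-1}\|z^{k+1}-z^k\|^2$ term, so the Zangwill step needs no uniform $\cO(\alpha_k)$ increment bound; your route is more direct from \Cref{prop:descent-bound-Lk} but relies on $\|x^{k+1}-x^k\|\leq C\alpha_k$, i.e.\ essentially on \Cref{lem:boundednesss}. Two small points to tighten in your write-up: (i) the Step~2 Gronwall argument should bound $\|\nabla f(y_i^k;\pi_i^k)\|$ via $f(z^k)$ rather than $f(x^k)$ to avoid circularity (this is exactly how \Cref{lem:boundednesss} proceeds); (ii) ``continuity of $f$'' alone does not transfer $f(z^k)\to f^*$ to $\{f(x^k)\}_k$ since $\{z^k\}$ need not converge---use instead the Lipschitz estimate $|f(x^k)-f(z^k)|\leq \tfrac{1}{2\sL}\max\{\|\nabla f(x^k)\|^2,\|\nabla f(z^k)\|^2\}+\sL\|x^k-z^k\|^2$, each term of which vanishes.
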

\begin{remark} \label{rem:global_convergence}
We note that the results in \Cref{thm:global_convergence} hold \emph{surely} for every sequence of permutations $\{\pi^k\}_k$. Furthermore, conditions of the form \eqref{step-size-1} appear frequently in the analysis of shuffling methods; see \cite{nguyen2021unified,li2023convergence}. The requirements in \eqref{step-size-1} are satisfied, e.g., for polynomial step sizes $\alpha_k \sim {k^{-\gamma}}$ with $\gamma\in(\frac13,1]$. 
\end{remark}
 
\begin{proof}[Proof of \texorpdfstring{\cref{thm:global_convergence}}{Theorem 5.1}]
Let us define 
\begin{equation}
    \label{eq:def G and u}
    \sG:=\Delta\Big(m^3{\sum}_{k=1}^{\infty}\alpha_k^3\Big) \cdot \sD m^3 \quad\text{and}\quad u_k := \sG\; {\sum}_{i=k}^\infty \alpha_i^3.
\end{equation}
Due to ${\sum}_{k=1}^{\infty}\alpha_k^3<\infty$, we have $\sG < \infty$ and $u_k\to 0$. Applying \cref{prop:approximate descent} (a) with $T = \infty$, it follows
\begin{equation}
    \label{eq:thm:approximate descent}
    \lya_{k+1} + u_{k+1} \leq \lya_k + u_k - \frac{1-\beta}{4m\alpha_k}\|z^{k+1} - z^{k}\|^2 - \frac{m\alpha_k}{4(1-\beta)}\Big[\frac{\|\nabla f(x^k)\|^2}{4} +\frac{\|\nabla f(z^k)\|^2}{5} \Big].
\end{equation}
Clearly, the sequence $\{\lya_k + u_k\}_k$ is non-increasing and bounded from below by \ref{A1}. Combining this with $u_k\to0$, we can infer that $\{\lya_k\}_k$ converges to some $f^*\in\R$. Hence, summing \eqref{eq:thm:approximate descent} for $k \geq 1$, we have 
\begin{equation}
    \label{eq:global-core}
    {\sum}_{k=1}^\infty \alpha_k \|\nabla f(x^k)\|^2 < \infty, \quad {\sum}_{k=1}^\infty \alpha_k \|\nabla f(z^k)\|^2 < \infty, \quad {\sum}_{k=1}^\infty  \frac{\|z^{k+1}-z^{k}\|^2}{\alpha_k} < \infty.
\end{equation}
Thanks to \eqref{eq:global-core}, Kronecker's lemma (\Cref{lemma:kronecker}) is applicable with $r_k := \alpha_k\|\nabla f(x^k)\|^2$ and $s_k := \sum_{i=1}^k \alpha_i$ (by \eqref{step-size-1}, $\{s_k\}_k$ is non-decreasing with $s_k \to \infty$). This yields 
\[ s_T^{-1} \cdot {\sum}_{k=1}^T \alpha_k \Big({\sum}_{i=1}^k \alpha_i\Big) \|\nabla f(x^k)\|^2 \to 0. \] 
Consequently, due to $\sum_{k=1}^T \alpha_k (\sum_{i=1}^k \alpha_i) = \frac12((\sum_{k=1}^T \alpha_k)^2 + \sum_{k=1}^T \alpha_k^2) = \Theta((\sum_{k=1}^T \alpha_k)^2)$, it follows $\min_{k=1,\dots,T} \|\nabla f(x^k)\|^2 = o\big((\sum_{k=1}^T \alpha_k)^{-1}\big)$. This finishes the proof of part (a).

Next, we verify the convergence of the sequences $\{f(z^k)\}_k$ and $\{\|\nabla f(z^k)\|\}_k$. We then transfer these results to $\{f(x^k)\}_k$ and $\{\|\nabla f(x^k)\|\}_k$ using the following auxiliary lemma. 


\begin{lemma}[Distance between $\{x^k\}_k$ and $\{z^k\}_k$]
\label{prop:distance}
Let the assumptions \ref{A1}--\ref{A2} hold and let $\{x^k\}_k$ be generated by $\RRM$ with $\beta\in[0,1)$, $\lambda \in [0, \frac{\beta}{1-\beta}]$, and step sizes $\{\alpha_k\}_k \subseteq \R_{++}$ satisfying \eqref{step-size-1}. Let $\{z^k\}_k$ be defined as in \eqref{eq:def-z}. Then, it holds that $\|z^k-x^k\| \to 0$ as $k \to \infty$.
%
\end{lemma}
\begin{proof}
By \cref{lemma:sum z-x} (a) (see Appendix \ref{sec:key-lemmas}) and using $\|\nabla f(z^k)\|^2 \leq 2\sL [f(z^k)-\bar f]$ (cf.\ \eqref{eq:Lip-bound}), we have
    \[
\|z^{k+1} - x^{k+1}\|^2 \leq \eta \|z^k  -x^k\|^2 + \frac{15\sL\beta^2m^2[f(z^k) - \bar f]}{(1-\beta)^2(1-\beta^m)}\cdot \alpha_k^2, \quad \eta \in (0,1),
\]
for all $k$. The assumptions in \Cref{prop:distance} imply that our preceding derivations are valid. In particular, $\{\mathcal R_k\}_k$ is bounded, and hence $\{f(z^k)\}_k$ is also a bounded sequence. Thus, there is $\tilde{\sG}>0$ such that $\|z^{k+1} - x^{k+1}\|^2 \leq \eta \|z^k  -x^k\|^2 + \tilde{\sG}\alpha_k^2$. Due to $\alpha_k \to 0$ and by \cite[Proposition A.30]{ber16}, the sequence $\{\|x^k-z^k\|\}_k$ converges to zero.
\end{proof}

We now continue with the analysis of $\{\|\nabla f(z^k)\|\}_k$. The relation \eqref{eq:global-core} and $\sum_{k=1}^\infty \alpha_k = \infty$ imply $\liminf_{k\to\infty} \|\nabla f(z^k)\| = 0$. Next, we show $\lim_{k\to\infty} \|\nabla f(z^k)\| = 0$ by contradiction. Let us assume that $\limsup_{k\to\infty} \|\nabla f(z^k)\| > 0$. Then, there exist $\varepsilon>0$ and infinite subsequences $\{t_j\}_{j}$ and $\{\ell_j\}_{j}$ such that $t_j<\ell_j<t_{j+1}$,
	\begin{equation}\label{eq:construct subsequence}
		\|\nabla f({z}^{t_j})\|\geq 2\varepsilon,\quad \|\nabla f({z}^{\ell_j})\|<\varepsilon,\quad\text{and}\quad\|\nabla f({z}^{k})\|\geq\varepsilon, 
	\end{equation}
    for all $k=t_j,\dots,\ell_j-1$. 
	Combining \eqref{eq:global-core} and \eqref{eq:construct subsequence}, we can infer $\infty>  {\sum}_{k=1}^{\infty}\alpha_{k}\|\nabla f(z^{k})\|^2 \geq \varepsilon^2{\sum}_{j=1}^{\infty}{\sum}_{k=t_j}^{\ell_j-1}\alpha_{k}$, which yields $\lim_{j\rightarrow \infty} \nu_j = 0$ where $\nu_j := {\sum}_{k=t_j}^{\ell_j-1}\alpha_{k}$.
	Furthermore, applying the triangle and the Cauchy-Schwarz inequality, it follows that
    \begin{equation*}
        \begin{aligned}
            \|{z}^{\ell_j} - {z}^{t_j}\|  & \leq {\sum}_{k=t_j}^{\ell_j-1}{\alpha_{k}^{1/2}} \cdot [\alpha_k^{-1/2}\|{z}^{k+1}-z^{k}\|] \\ & \leq \Big[ {\sum}_{k=t_j}^{\ell_j-1} \alpha_{k} \cdot {\sum}_{k=t_j}^{\ell_j-1} \alpha_k^{-1}\|z^{k+1}-z^k\|^2\Big]^{1/2} \leq \sqrt{\nu_j} \cdot \Big[ {\sum}_{k=1}^{\infty} \alpha_k^{-1}\|z^{k+1}-z^{k}\|^2\Big]^{1/2}. 
        \end{aligned}
    \end{equation*}
    Hence, due to \eqref{eq:global-core} and $\nu_j \to 0$, we have $\|z^{\ell_j}-z^{t_j}\| \to 0$ as $j \to \infty$. 
	%
	Following the construction in \eqref{eq:construct subsequence} and using the Lipschitz continuity of $\nabla f$ and the triangle inequality, it holds that
\[		\varepsilon\leq |\|\nabla f({z}^{\ell_j})\|-\|\nabla f({z}^{t_j})\| | \leq \|\nabla f({z}^{\ell_j})-\nabla f({z}^{t_j})\|\leq \sL\|{z}^{\ell_j}-{z}^{t_j}\| \to 0, \quad j \to \infty.
\]
This is a contradiction and establishes $\lim_{k\to\infty}\|\nabla f(z^k)\|=0$. Moreover, invoking \cref{prop:distance}, we have $\|z^k - x^k\|\to 0$ and $\|\nabla f(x^k)\| \leq \sL\|x^k - z^k\| + \|\nabla f(z^k)\| \to 0$ as $k \to \infty$.

Finally, we discuss the convergence of the sequences $\{f(z^k)\}_k$ and $\{f(x^k)\}_k$. Combining $\lya_k \to f^*$ and $\|z^k - x^k\|\to 0$, we conclude $f(z^k) \to f^*$, $k \to \infty$, by the definition of $\lya_k$ (cf.\ \cref{def:L}). By the Lipschitz continuity of $\nabla f$ and Young's inequality, we further obtain
\begin{equation*}
\begin{aligned}
    |f(x^k) - f(z^k)| &\leq \max\{|\langle \nabla f(x^k), x^k-z^k\rangle|,|\langle \nabla f(z^k), x^k-z^k\rangle|\} + \frac{\sL}{2}\|x^k-z^k\|^2\\
	& \leq \frac{1}{2\sL}\max\{\|\nabla f(x^k)\|^2,\|\nabla f(z^k)\|^2\} + \sL\|x^k-z^k\|^2.
\end{aligned}
\end{equation*}
Thus, due to $\|\nabla f(x^k)\| \to 0$, $\|\nabla f(z^k)\| \to 0$, $\|x^k-z^k\|\to0$, and $f(z^k) \to f^*$ as $k\to\infty$, we can infer $|f(x^k) - f^*| \leq |f(z^k) - f^*| + |f(x^k) - f(z^k)| \to 0$. 
\end{proof}

\section{Iterate convergence}
In this section, we provide (last-)iterate convergence guarantees for $\RRM$ when the objective function is definable in an o-minimal structure; see \cite{AttBolSva13,kur98,lojasiewicz1993,van1998tame}.

For completeness, we briefly recall relevant terminologies. An o-minimal structure on the real field is a sequence of Boolean algebras of subsets of Euclidean spaces that is closed under Cartesian products and coordinate projections, contains all algebraic sets, and whose one-dimensional sets are finite unions of points and intervals. A set is called definable if it belongs to such a structure, and a function is called definable if its graph is definable; see, e.g., \cite{van1998tame,coste2000introduction}.
\par
The class of definable functions is extensive and encompasses a wide array of objective functions relevant to practical applications. In particular, proper closed semi-algebraic functions 
and functions in the log-exp structure \cite{van1998tame} are definable; cf.\  \cite{kur98,BolDanLew06,AttBolRedSou10} for more detailed discussions. Prominent examples include, e.g., logistic regression \cite{li2018calculus}, principal component analysis \cite{liu2019quadratic}, polynomial optimization \cite{DAcuntoKurdyka2005}, and deep neural networks \cite{davis2020stochastic}. 

\begin{assumption}
	We consider the following assumptions:
	\begin{enumerate}[label=\textup{\textrm{(B.\alph*)}},topsep=1ex,itemsep=1ex,partopsep=0ex,leftmargin=8ex]
		\item \label{B2} The function $f$ is definable in an o-minimal structure. 
        \item \label{B1} We have $\liminf_{k\to \infty} \|x^k\| < \infty$ or, equivalently, the set $\cA$ defined in \eqref{eq:limit point set} is non-empty.
\end{enumerate}
\end{assumption}

Our analysis is based on the global convergence results (\Cref{thm:global_convergence}) and the well-known Kurdyka-{\L}ojasiewicz (KL) inequality, \cite{loj63,kur98,AttBolRedSou10}. KL-based analysis techniques have become a key and highly successful tool to establish iterate convergence in nonconvex optimization \cite{absil2005convergence,AttBolSva13,BolSabTeb14,li2015global}. We now present a KL property for definable functions that is particularly suited for stochastic settings. \Cref{lem:KL} is a simplified version of \cite[Lemma 4.11]{josz2024proximal} by Josz et al.; see also \cite[Lemma 4.1]{qiu2025normal} for comparison.

\begin{lemma}[KL property]
\label{lem:KL}
    Let \ref{B2} hold and let $x^*\in \crit(f) := \{x \in \Rn: \nabla f(x) = 0\}$ be given. For all $\vartheta\in(0,1)$, there are $\sC > 0$, $\eta\in(0,1]$, a neighborhood $U$ of $x^*$, and a continuous, concave function $\varrho:[0,\eta) \to \R_+$, which is continuously differentiable on $(0,\eta)$, satisfying 
    \begin{equation}
         \label{eq:desingularization function}
         \varrho(0)=0\quad \text{and} \quad 
    1/\varrho^\prime(s+t) \leq 1/\varrho^\prime(s) + \sC t^{\vartheta} \quad \text{for all $s,t >0$ with $s+t<\eta$},
    \end{equation}
     such that for all $x\in U \cap\{x:0<|f(x)-f(x^*)|<\eta\}$ the following KL inequality holds:
     \begin{equation}
         \label{eq:mer-kl}
         \varrho^\prime(|f(x)-f(x^*)|) \cdot  \|\nabla f(x)\|\geq 1.
     \end{equation} 
\end{lemma}
As mentioned, the KL inequality \eqref{eq:mer-kl} is an important geometric concept which can be leveraged in the analysis of algorithms applied to nonconvex problems, \cite{absil2005convergence,AttBolSva13,BolSabTeb14,li2015global}. 

If the iterates $\{x^k\}_k$ are bounded ($\limsup_{k\to\infty} \|x^k\| < \infty$) and if \eqref{eq:mer-kl} holds at every point $x^* \in \cA$, then the so-called uniformized KL property, \cite[Lemma 6]{BolSabTeb14}, can be used to significantly simplify the discussion. Hence, boundedness of the iterates $\{x^k\}_k$ appears as a prominent assumption in KL-based convergence analyses, see \cite{AttBolRedSou10,BolSabTeb14,li2023convergence,pock2016inertial,tadic2015convergence}.
In the following, we provide (last)-iterate convergence guarantees for $\RRM$ under the weaker assumption \ref{B1}.

\begin{theorem}[Iterate convergence and finite length] \label{thm:iter_convergence}
	Let \ref{A1}--\ref{A2} and \ref{B2}--\ref{B1} hold and let $\{x^k\}_k$ be generated by $\RRM$ with parameters satisfying $\beta\in[0,1)$, $\lambda\in[0,\frac{\beta}{1-\beta}]$, and
 \begin{equation}
     \label{thm:iter step size}
\{\alpha_k\}_k \subset \R_{++}, \quad {\sum}_{k=1}^{\infty} \alpha_k = \infty,\quad {\sum}_{k=1}^{\infty} \alpha_k^{1+\xi} < \infty, \quad \text{for some $\xi\in(0,1)$.}
    \end{equation}
 \begin{enumerate}[label=\textup{(\alph*)},topsep=1ex,itemsep=0.5ex,partopsep=0ex, leftmargin = 5ex]
	\item The iterates $\{x^k\}_k$ converge to some $x^*\in\crit(f)$ and it holds that $\sum_{k=1}^\infty \|z^{k+1}-z^k\| < \infty$, where $\{z^k\}_k$ is defined as in \eqref{eq:def-z}.
    \item We have $\sum_{k=1}^\infty \alpha_k \|\nabla f(x^k)\| < \infty$ and $\min_{k=1,\dots,T} \|\nabla f(x^k)\|^2 = o\big(({\sum_{k=1}^T \alpha_k})^{-2}\big)$, $T \to \infty$.
	\end{enumerate}
\end{theorem}

\begin{remark}[Iterate convergence for stochastic methods]
Most of the existing last-iterate results for stochastic algorithms are limited to convex problems; see, e.g., \cite{harvey2019tight,sebbouh2021almost,liu2022almost}. Moreover, successful applications of KL-type strategies in the context of nonconvex stochastic optimization still remain fairly rare. To our knowledge, the first comprehensive KL-based convergence analysis for $\SGD$ is presented in \cite{tadic2015convergence}, showing (last-)iterate convergence under standard variance assumptions. Recently, in \cite{dereich2021convergence,qiu2024convergence}, the results in \cite{tadic2015convergence} have been extended to momentum variants of $\SGD$ using relaxed techniques and weaker step size requirements.  For shuffling-type methods, existing KL-based analyses appear to be limited to $\RR$ \cite{li2023convergence} and $\SMG$ \cite{LiaXu24}. Both works \cite{li2023convergence,LiaXu24} establish iterate convergence if the underlying objective function satisfies a quasi-additive-type KL or the stronger {\L}ojasiewicz inequality and require a priori boundedness of the iterates or coercivity conditions. In \cref{thm:iter_convergence}, we prove last-iterate convergence of $\RRM$ under the weaker assumptions \ref{B2}--\ref{B1}. Since $\RR$ is a special case of $\RRM$, this allows us to strengthen the convergence guarantees derived in \cite{li2023convergence}. Furthermore, in light of \ref{B1}, we may summarize the implications of \cref{thm:iter_convergence} as follows: we have either $x^k \to x^* \in \crit(f)$ or $\|x^k\| \to \infty$ (no other options can occur).
\end{remark}

\begin{remark} Under the additional geometric property \ref{B2}, the bound $\min_{k=1,\dots,T} \|\nabla f(x^k)\|^2 = o\big(({\sum_{k=1}^T \alpha_k})^{-1}\big)$, shown in \Cref{thm:global_convergence} (a), improves to $\min_{k=1,\dots,T} \|\nabla f(x^k)\|^2 = o\big(({\sum_{k=1}^T \alpha_k})^{-2}\big)$. As a result, under the setting of \cref{thm:iter_convergence} and if we consider polynomial step sizes of the form $\alpha_k \sim k^{-\gamma}$, we can infer
\[
    \min_{k=1,\dots,T} \|\nabla f(x^k)\|^2 \leq o({T^{-2(1-\gamma)}}), \quad T \to \infty,
    \]
provided that $\gamma \in (\frac12,1)$. (In this case, the conditions formulated in \eqref{thm:iter step size} are clearly satisfied). This improves the complexity bound $\min_{k=1,\ldots,T}\|\nabla f(x^k)\|^2 =\cO(1/T^{1-\gamma})$ from \cref{coro:complexity-polynomial}. Finally, we note that asymptotic complexity bounds similar to \Cref{thm:global_convergence} (a) have been shown in \cite{sebbouh2021almost,liu2022almost} for $\SGD$ and $\SGDM$. The results in \cite{sebbouh2021almost,liu2022almost} are based on the assumption $\sum_{k=1}^\infty \frac{\alpha_k}{\sum_{i=1}^{k-1}\alpha_i} = \infty$. Here, we can avoid this additional requirement by leveraging Kronecker's Lemma (cf.\ \Cref{lemma:kronecker}).
\end{remark}

\subsection{Proof of \texorpdfstring{\Cref{thm:iter_convergence}}{Theorem 6.3}}
The next lemma is the main local estimate used in the proof of \Cref{thm:iter_convergence}. It combines the descent estimate with the KL inequality and shows that, once the proxy iterate $z^k$ lies in a suitable (KL) neighborhood, the step length $\|z^{k+1}-z^k\|$ and the stationarity term $\alpha_k\|\nabla f(x^k)\|$ are controlled by the decrease of the ``desingularized'' term $\Psi_k=\varrho(\psi_k)$, up to summable errors. This is one of the key ingredients for proving finite length and hence last-iterate convergence. Since the proof is technical, we defer the detailed verification to \Cref{proof:lemma:kl-bound}.

\begin{lemma}[Local KL estimate]\label{lemma:kl-bound}
	Let the conditions stated in \cref{thm:global_convergence} hold and assume that \ref{B1}--\ref{B2} are satisfied. For arbitrary $x^* \in \cA$ and $\vartheta \in (0,1)$, let $\sC>0$, the neighborhood $U\subseteq\Rn$, and the desingularizing function $\varrho:[0,\eta)\to\R_+$ be given such that the conditions in \cref{lem:KL} hold. In addition, introducing $\psi_k:=f(z^{k}) - f(x^*) + u_k + \sH\alpha_{k}\|z^{k} - x^{k}\|^2$, suppose that $z^k \in  U$, $\sH\|z^k-x^k\|^2\leq 1$, and $0 < \psi_k \leq |f(z^k) - f(x^*)| + u_k + \sH\alpha_{k}\|z^{k} - x^{k}\|^2 < \eta$. Then, we have
\[
  \|z^{k+1}-z^{k}\| + \frac{m}{1-\beta} \alpha_k\|\nabla f(x^k)\|  
  \leq 40(\Psi_k - \Psi_{k+1}) +  \frac{\sC m}{1-\beta} (\alpha_k u_k^\vartheta + \alpha_k^{1+\vartheta}), 
\]
where $\Psi_k := \varrho(\psi_k)$ and $u_k :=\sG \sum_{i=k}^\infty\alpha_i^3$ and $\sG, \sH$ are defined in \eqref{eq:define-L} and \eqref{eq:def G and u}.
\end{lemma} 
 
\begin{proof}[Proof of \Cref{thm:iter_convergence}]
We first outline the flow of the proof. We begin by converting the approximate descent estimate in \cref{prop:approximate descent} into a descent recursion for the desingularized term $\Psi_k=\varrho(\psi_k)$. We then choose a sufficiently large index so that the proxy iterates $\{z^k\}_k$ enter a (KL) neighborhood of an accumulation point. The local KL estimate in \cref{lemma:kl-bound} is applied inductively to keep the iterates in this neighborhood and to establish the finite-length property of $\{z^k\}_k$, i.e., $\sum_{k=1}^\infty\|z^{k+1}-z^{k}\|<\infty$. Finally, this finite-length property yields the convergence of the proxy iterates $\{z^k\}_k$, and the convergence of the original iterates $\{x^k\}_k$ follows from $\|x^k-z^k\|\to0$.

\emph{Step 1: Conversion to a desingularized descent recursion.}
Let $x^*\in\cA$ be arbitrary. 
By \cref{thm:global_convergence}, we can infer that $x^*$ is a stationary point of $f$, i.e., $x^*\in\crit(f)$. Applying \Cref{thm:global_convergence} (c) and the continuity of $f$, it follows that $\lim_{k\to\infty} f(z^k) = \lim_{k\to\infty} f(x^k) = f^* = f(x^*)$.

Next, applying \cref{prop:approximate descent} (a) with $T = \infty$ (as in \eqref{eq:thm:approximate descent}), it again follows
\begin{equation*}
    \lya_{k+1} + u_{k+1} \leq \lya_k + u_k - \frac{1-\beta}{4m\alpha_k}\|z^{k+1} - z^{k}\|^2 - \frac{m\alpha_k}{4(1-\beta)}\Big[\frac{\|\nabla f(x^k)\|^2}{4} +\frac{\|\nabla f(z^k)\|^2}{5} \Big].
\end{equation*} 
Adding $\bar f-f^*$ on both sides and setting $\delta_k := \frac{m\alpha_k}{1-\beta}$ and $\psi_k := f(z^{k}) - f^* + u_{k} +  \sH \alpha_{k}\|z^{k} - x^{k}\|^2$, this bound can be expressed as
\begin{equation}
		\label{eq:lem:kl-key-4}
  \psi_{k+1} \leq \psi_k - \frac{1}{4\delta_k}\|z^{k+1} - z^{k}\|^2 - \frac{\delta_k}{20} \|\nabla f(z^k)\|^2 - \frac{\delta_k}{16}  \|\nabla f(x^k)\|^2.
\end{equation}
Since $f(z^{k}) \to f^*=f(x^*)$, $u_{k}\to0$ (by $\sum_{k=1}^{\infty}\alpha_k^3 < \infty$) and $\|z^{k} - x^{k}\|\to 0$ (cf.\ \Cref{prop:distance}), we conclude that $\psi_k\downarrow 0$ and $\psi_k \geq 0$ for all $k\geq 1$. Furthermore, without loss of generality, we may assume that $\psi_k > 0$ for all $k$. (This can be achieved by rescaling $u_k$ or $\sG$---if necessary).

\emph{Step 2: Entering the KL neighborhood.}
Based on \eqref{thm:iter step size}, we now choose $\vartheta = \xi$. Due to \ref{A2}, this leads to ${\sum}_{k=1}^{\infty} \alpha_k^{1+\vartheta}<\infty$ and
\[ {\sum}_{k=1}^{\infty} \alpha_k u_k^\vartheta = \sG^{\vartheta} \,{\sum}_{k=1}^{\infty} \alpha_k \Big({\sum}_{i=k}^\infty \alpha_i^3 \Big)^\vartheta \leq \sG^{\vartheta} \,{\sum}_{k=1}^{\infty} \alpha_k^{1+\vartheta} \cdot \Big({\sum}_{i=1}^\infty \alpha_i^2 \Big)^\vartheta < \infty. \]
Thus, we have $\sum_{i=k}^{\infty} \alpha_i^{1+\vartheta} \to 0$ and $\sum_{i=k}^{\infty}\alpha_i u_i^{\vartheta} \to 0$, $k \to \infty$. Using $x^*\in\cA$ and $\|x^k-z^k\| \to 0 $, there is a subsequence $\{\ell_k\}_k$ such that $\lim_{k\to\infty}z^{\ell_k} = x^*$. Hence, given $\rho>0$ with $\mathcal{B}(x^*,\rho) := \{x: \|x-x^*\| < \rho\} \subseteq U$, there is $k_\circ \geq 1$ such that
\begin{equation}
	\label{eq:thm-kl-2}
	\|z^{k_\circ} - x^*\| + 40 \Psi_{k_\circ} + \frac{\sC m}{1-\beta}{\sum}_{i=k_\circ}^{\infty} (\alpha_i  u_i^\vartheta + \alpha_i^{1+\vartheta})  < \rho
\end{equation}
and for all $k \geq k_\circ$, it holds that
\begin{equation}
	\label{eq:thm-kl-1}
	\sH\|z^k-x^k\|^2 \leq 1 \quad \text{and} \quad 0 < \psi_k \leq |f(z^k) - f(x^*)| + u_k + \sH \alpha_k \|z^k - x^k\|^2 < \eta.
\end{equation}

\emph{Step 3: Main induction.}
The key step of the proof is to show that the following statements are true for all $k \geq k_\circ$:
	\begin{enumerate}[label=(\alph*),topsep=0.5ex,itemsep=1ex,partopsep=0ex]
	\item We have $\sH\|z^k-x^k\|^2 \leq 1$, $0 < \psi_k \leq |f(z^k) - f(x^*)| + u_k + \sH\alpha_{k}\|z^{k} - x^{k}\|^2 < \eta $, and $z^k \in \mathcal{B}(x^*,\rho)$.
	\item $\sum_{i=k_\circ}^{k}\|z^{i+1} - z^{i}\| + \frac{m}{1-\beta} \alpha_i\|\nabla f(x^i)\|  \leq 40 [\Psi_{k_\circ}-\Psi_{k+1}] + \frac{\sC m}{1-\beta}\sum_{i=k_\circ}^{k} \alpha_i (u_i^\vartheta + \alpha_i^\vartheta)$. 
	\end{enumerate}
	We prove these claims by induction. Clearly, (a) \& (b) hold for $k=k_\circ$ according to \eqref{eq:thm-kl-2}, \eqref{eq:thm-kl-1} and \cref{lemma:kl-bound}. Let (a) \& (b) be valid for $k=k_\circ,\dots,t-1$ and consider $k=t$. By \eqref{eq:thm-kl-1}, we have $0 < \psi_k \leq |f(z^t) - f(x^*)| + u_t + \sH\alpha_{t}\|z^{t} - x^{t}\|^2 < \eta$ and $\sH\|z^{t}-x^{t}\|^2 \leq 1$. We now show $z^{t}\in \mathcal{B}(x^*,\rho)$. Using the triangle inequality and claim (b) with $k=t-1$, we obtain 
    \begin{equation*}
        \begin{aligned}
            \|z^{t} - x^*\| &\leq \|z^{k_\circ} - x^*\| +  {\sum}_{i= k_\circ}^{t-1}\|z^{i+1} - z^{i}\| \\&\leq \|z^{k_\circ} - x^*\| + 40 [\Psi_{k_\circ} - \Psi_{t}] + \frac{\sC m}{1-\beta}{\sum}_{i=k_\circ}^{t-1} \, \alpha_i (u_i^\vartheta + \alpha_i^\vartheta) < \rho,
        \end{aligned}
    \end{equation*}
	where the last step follows from \eqref{eq:thm-kl-2} and $\Psi_{t} \geq 0$. This proves (a) for $k=t$. Hence, \cref{lemma:kl-bound} is applicable for $z^{t}$ and we have $\|z^{t+1}-z^{t}\| + \frac{m}{1-\beta}\alpha_{t}\|\nabla f(x^{t})\|\leq 40(\Psi_{t} - \Psi_{t+1}) + \frac{\sC m }{1-\beta}\alpha_{t}(u_{t}^\vartheta + \alpha_{t}^\vartheta)$.
	Combining this bound with statement (b) (for $k=t-1$), this yields
	 \[{\sum}_{i=k_\circ}^{t}\|z^{i+1} - z^{i}\| + \frac{m}{1-\beta} \alpha_i\|\nabla f(x^i)\| \leq 40 (\Psi_{k_\circ} - \Psi_{t+1}) + \frac{\sC m}{1-\beta}{\sum}_{i=k_\circ}^{t} \;\alpha_i (u_i^\vartheta + \alpha_i^\vartheta),	\]
	 and consequently (b) is true for $k=t$. Therefore, the statements (a) \& (b) are valid for all $k \geq k_\circ$. Taking $k\to \infty$ in (b) and using \eqref{eq:thm-kl-2}, we can then infer 
	 \begin{equation} \label{eq:pf:finite-length}
	 {\sum}_{k=k_\circ}^{\infty}\;\Big(\|z^{k+1} - z^{k}\| + \frac{m}{1-\beta} \alpha_k\|\nabla f(x^k)\|\Big) < \rho <\infty.
	 \end{equation}

\emph{Step 4: Last-iterate convergence.}
	 The finite-length property \eqref{eq:pf:finite-length} implies that $\{z^k\}_k$ is a Cauchy sequence. Since $\|x^k-z^k\| \to 0$, the original iterates $\{x^k\}_k$ converge to the same limit $x^* \in \crit(f)$. The rate for $\min_{k = 1,\dots,T} \|\nabla f(x^k)\|^2$ follows from $\sum_{k=1}^\infty \alpha_k\|\nabla f(x^k)\| < \infty$ by applying Kronecker's lemma (\Cref{lemma:kronecker}), as in the proof of \Cref{thm:global_convergence}.
\end{proof}

\section{Preliminary numerical experiments}\label{sec:preliminary-experiments}

In this section, we conduct preliminary numerical experiments to examine the effects of the step sizes, momentum parameters, batch sizes, and sampling schemes on the nonconvex binary classification problem
\begin{equation}
	\label{eq:binary-clas-prelim}
	\min_{x\in \Rn}~f(x) := \frac{1}{n}\,{\sum}_{i=1}^{n}\,[1-\tanh(b_i\cdot a_i^\top x)] + \frac{\mu}{2}\|x\|^2.
\end{equation}
Here, the pairs $(a_i,b_i)$, $i \in [n]$, correspond to the data samples and labels, and $\tanh:\R\to\R$ denotes the hyperbolic tangent function. The Lipschitz constant of $\nabla f$ can be computed as $\sL:=0.8\|A\|_2^2/n$. This nonconvex classification problem was previously considered in \cite{wang2017stochastic,milzarek2019stochastic}. For the $\ell_2$-regularization parameter, we follow the choice in \cite{mishchenko2020random} and set $\mu = \sL/\sqrt{n}$. We test four binary classification datasets: \texttt{gisette} ($n = 6{,}000$, $d = 5{,}000$), \texttt{rcv1} ($n=20{,}242$, $d=47{,}236$), \texttt{sido0} ($n=12{,}678$, $d=4{,}932$), and \texttt{news20} ($n=19{,}996$, $d=1{,}355{,}191$)\footnote{Datasets are available at \url{www.csie.ntu.edu.tw/~cjlin/libsvmtools/datasets} and \url{www.causality.inf.ethz.ch/challenge.php?page=datasets\#cont}}. Performance is measured using the relative training error $(f(x^k)-f^*) / \min\{1,f^*\}$, where $f^*$ denotes the smallest function value achieved by the deterministic gradient method being run with high accuracy and starting from $10$ random initial points. In \Cref{fig:prelim-momentum,fig:prelim-batch,fig:prelim-sampling}, we report results for the heavy-ball version of $\RRM$, i.e., we run $\RRM$ with $\lambda=0$; 
in the momentum-parameter experiment (\Cref{sec:num:mom}), $\beta$ varies over $[0,1)$, while in the experiments shown in \Cref{sec:num:batch,sec:num:sampling}, we set $\beta=0.9$. We did not observe substantive differences when using the Nesterov option $\lambda=\beta$, and therefore we omit those plots to avoid repetition. All experiments were performed in \texttt{MATLAB R2026a} on a MacBook Pro with Apple M1 Max.

\begin{figure}[t]
\centering
\setlength{\abovecaptionskip}{-3pt plus 3pt minus 0pt}
\setlength{\belowcaptionskip}{-10pt plus 3pt minus 0pt}
\begin{tikzpicture}
\node[right] at (3,2) {\includegraphics[width=9cm,trim=110 360 105 120,clip]{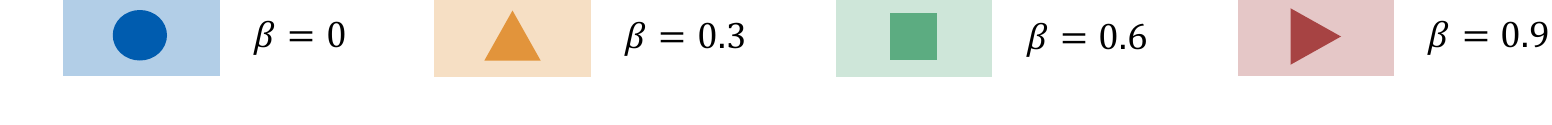}};
\node[right] at (0,0) {\includegraphics[width=3.7cm,trim=0 0 0 0,clip]{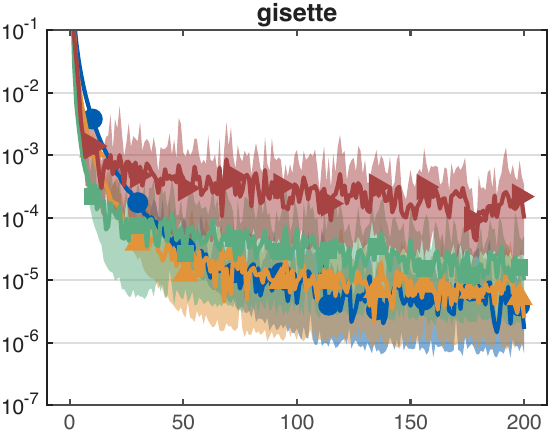}};
\node[right] at (4,0) {\includegraphics[width=3.7cm,trim=0 0 0 0,clip]{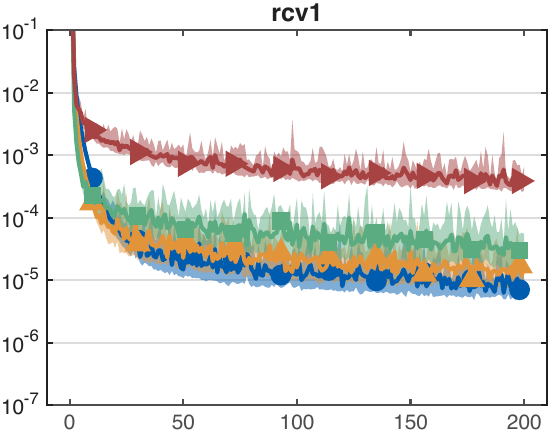}};
\node[right] at (8,0) {\includegraphics[width=3.7cm,trim=0 0 0 0,clip]{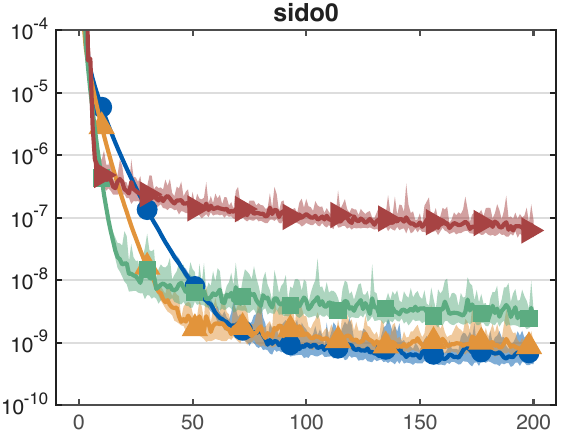}};
\node[right] at (12,0){\includegraphics[width=3.7cm,trim=0 0 0 0,clip]{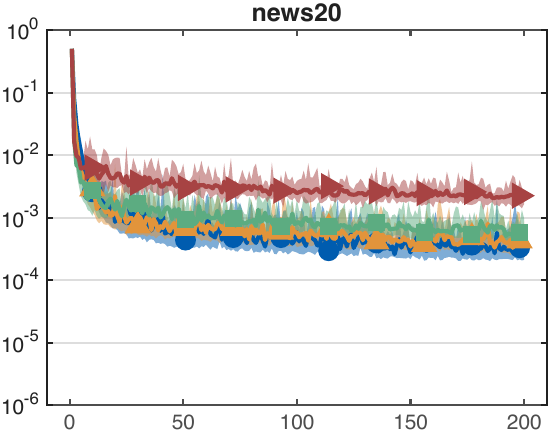}};
\node at (16.2,0) {\rotatebox{-90}{{\scriptsize $\gamma = 1/3$}}};
\node[right] at (0,-3.1) {\includegraphics[width=3.7cm,trim=0 0 0 0,clip]{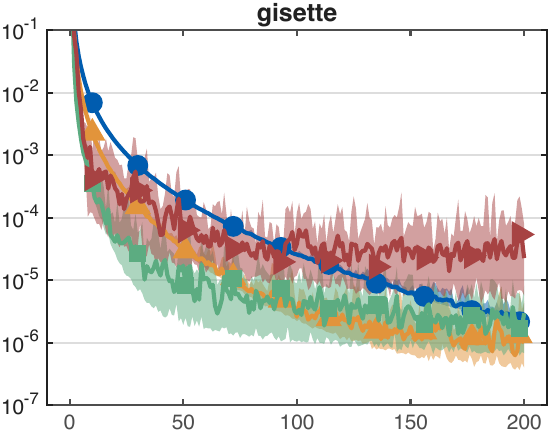}};
\node[right] at (4,-3.1) {\includegraphics[width=3.7cm,trim=0 0 0 0,clip]{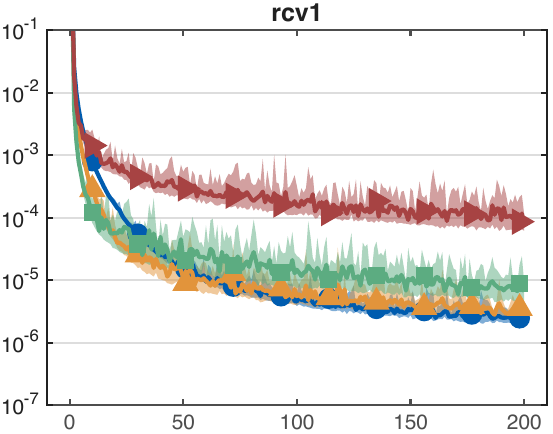}};
\node[right] at (8,-3.1) {\includegraphics[width=3.7cm,trim=0 0 0 0,clip]{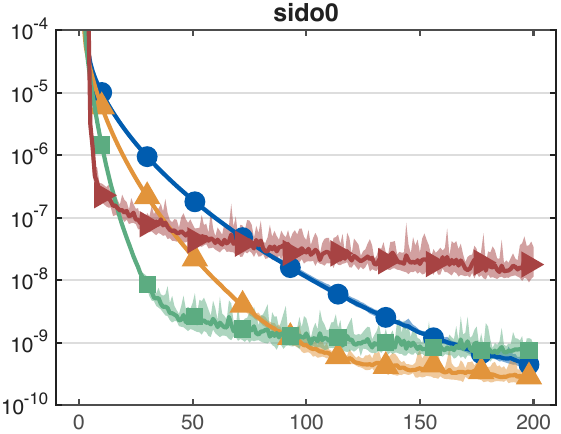}};
\node[right] at (12,-3.1){\includegraphics[width=3.7cm,trim=0 0 0 0,clip]{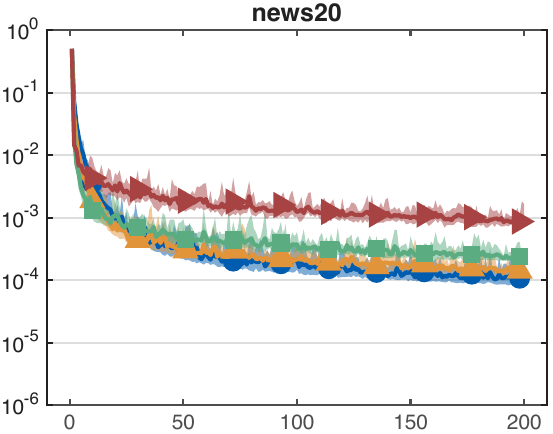}};
\node at (16.2,-3.1) {\rotatebox{-90}{{\scriptsize $\gamma = 1/2$}}};
\node[right] at (0,-6.2) {\includegraphics[width=3.7cm,trim=0 0 0 0,clip]{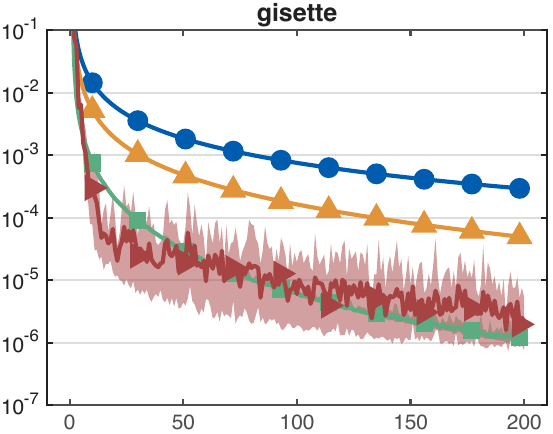}};
\node[right] at (4,-6.2) {\includegraphics[width=3.7cm,trim=0 0 0 0,clip]{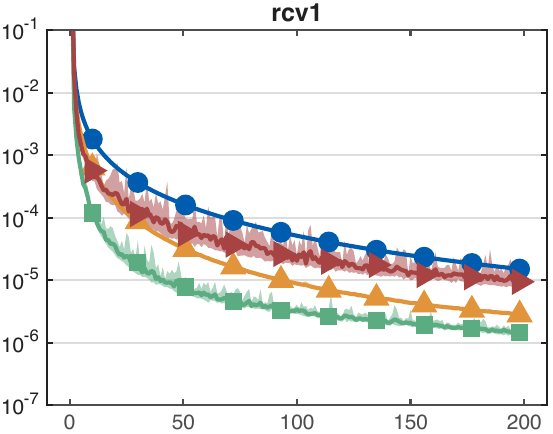}};
\node[right] at (8,-6.2) {\includegraphics[width=3.7cm,trim=0 0 0 0,clip]{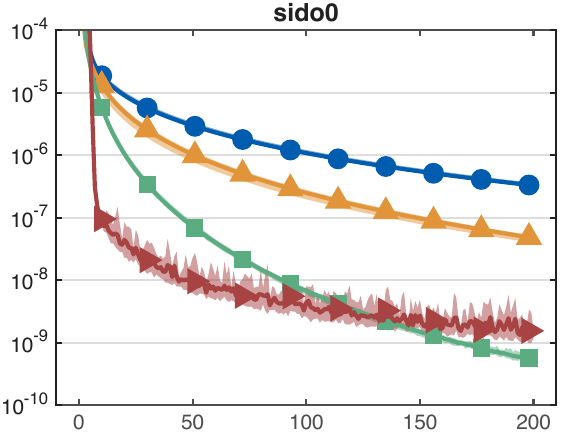}};
\node[right] at (12,-6.2){\includegraphics[width=3.7cm,trim=0 0 0 0,clip]{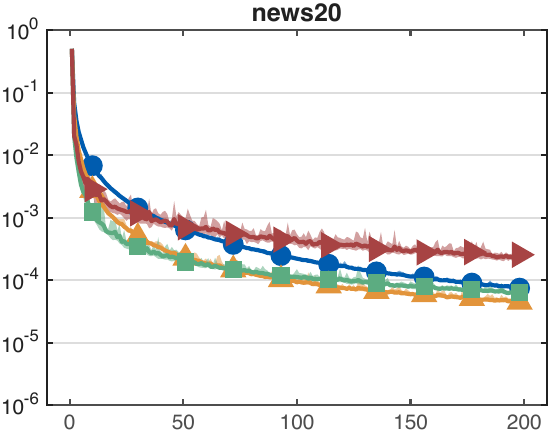}};
\node at (16.2,-6.2) {\rotatebox{-90}{{\scriptsize $\gamma = 3/4$}}};
\node[right] at (0,-9.3) {\includegraphics[width=3.7cm,trim=0 0 0 0,clip]{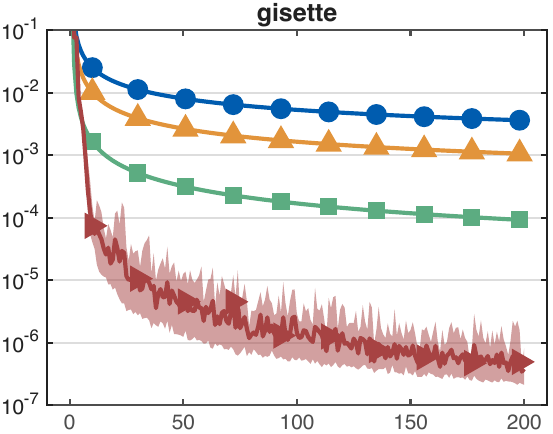}};
\node[right] at (4,-9.3) {\includegraphics[width=3.7cm,trim=0 0 0 0,clip]{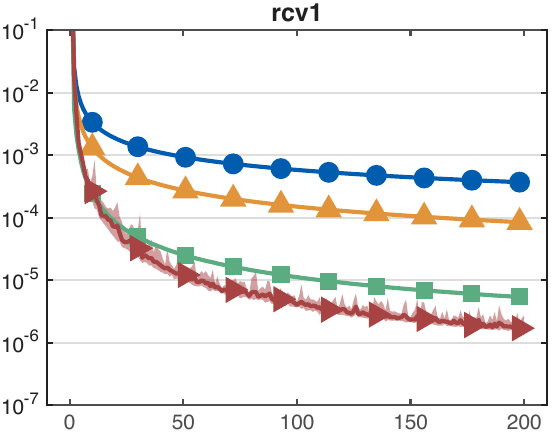}};
\node[right] at (8,-9.3) {\includegraphics[width=3.7cm,trim=0 0 0 0,clip]{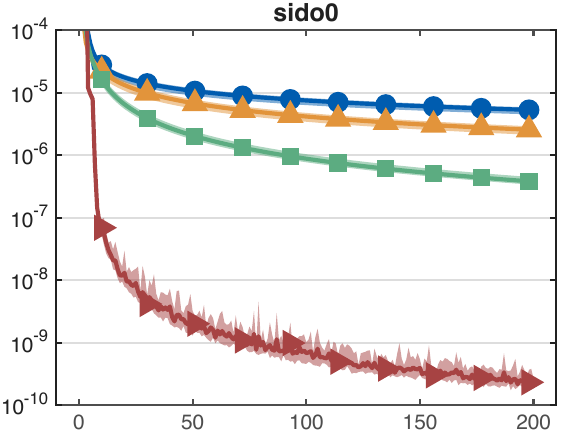}};
\node[right] at (12,-9.3){\includegraphics[width=3.7cm,trim=0 0 0 0,clip]{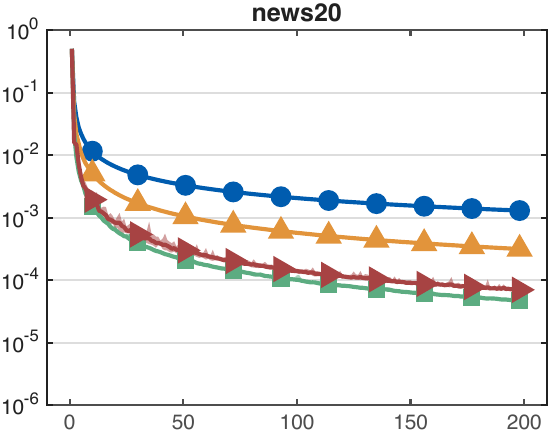}};
\node at (16.2,-9.3) {\rotatebox{-90}{{\scriptsize $\gamma = 1$}}};
\end{tikzpicture}
\caption{Performance of $\RRM$ for different step sizes $\alpha_k = 1/(Lk^\gamma)$, $\gamma \in \{\frac13,\frac12,\frac34,1\}$, $\beta\in[0,1)$, and $\lambda = 0$. The $x$-axis denotes the epoch $k$, and the $y$-axis denotes the relative training error $(f(x^k)-f^*)/\min\{1,f^*\}$. Averaged over $10$ independent runs.}
\label{fig:prelim-momentum}
\end{figure}

\subsection{Step sizes and effect of momentum terms} \label{sec:num:mom}
We first test $\RRM$ with fixed batch size $b=512$ using different polynomial steps sizes $\alpha_k = 1/(\sL k^\gamma)$, $\gamma \in \{\frac13,\frac12,\frac34,1\}$, and momentum parameters $\beta$. The preliminary results, depicted in \Cref{fig:prelim-momentum}, show that $\RRM$ performs effectively across a wide range of momentum parameters $\beta\in[0,1)$ and is not limited to small values of $\beta$. This observation is consistent with our theoretical results, which do not impose a small-momentum requirement and establish convergence for arbitrary $\beta\in[0,1)$. In the \texttt{gisette} and \texttt{sido0} datasets, setting $\beta=0.9$ and $\gamma = 1$ allows $\RRM$ to achieve a significantly lower objective value. More generally, larger choices of $\beta$ and $\gamma$ tend to lead to faster convergence in our experiments, suggesting that $\RRM$ can outperform the basic $\RR$ method.


\subsection{Effect of batch sizes} \label{sec:num:batch}
In \Cref{fig:prelim-batch}, we illustrate the performance of $\RRM$ for different batch sizes $b\in\{16,64,256,512\}$ and $\alpha_k = 1/(Lk)$, $\beta = 0.9$. To compare the batch sizes, we measure progress in terms of stochastic gradient steps rather than epochs. Each stochastic gradient step uses $b$ component-gradient evaluations, and each evaluation of $\nabla f_i$, $i\in[n]$, is counted as one gradient evaluation. The results show a clear and common trend: larger batch sizes lead to faster convergence. 

 
\subsection{Effect of sampling schemes} \label{sec:num:sampling}
Finally, in \Cref{fig:prelim-sampling}, we compare $\RRM$ with its deterministic version, the incremental gradient method with momentum ($\IGM$), the shuffle-once method with momentum ($\SOM$), and stochastic gradient descent with momentum ($\SGDM$). Here, $\SGDM$ uses a with-replacement sampling scheme, while $\RRM$ shuffles all the samples at each epoch. We consider the heavy-ball variants of those momentum methods, i.e., $\lambda=0$ and $\beta=0.9$. We use mini-batches of size $b=512$ and $\alpha_k = 1/(Lk)$ for all tested algorithms. The preliminary results indicate that $\RRM$ generally achieves faster convergence than the other methods. We also observe that $\SOM$, which reshuffles the data only once, performs similarly to $\IGM$, while $\SGDM$ often has the slowest progress. On the \texttt{news20} dataset, $\IGM$ converges slowly, likely due to an unfavorable data ordering; after shuffling the data, even just once, the performance improves.

\begin{figure}[t]
\centering
\setlength{\abovecaptionskip}{-3pt plus 3pt minus 0pt}
\setlength{\belowcaptionskip}{-10pt plus 3pt minus 0pt}
\begin{tikzpicture}
\node[right] at (3,2) {\includegraphics[width=9cm,trim=110 360 100 120,clip]{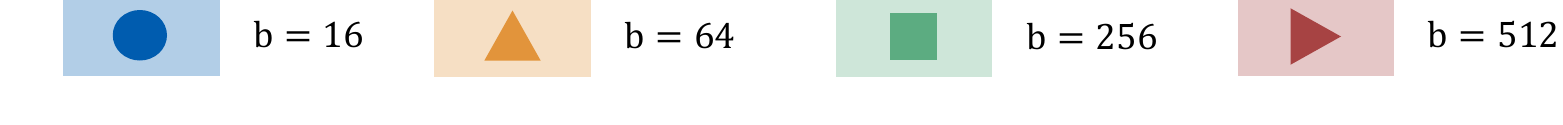}};
\node[right] at (0,0) {\includegraphics[width=3.7cm,trim=0 0 0 0,clip]{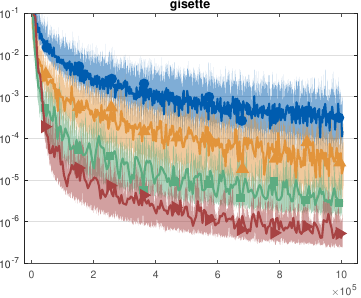}};
\node[right] at (4,0) {\includegraphics[width=3.7cm,trim=0 0 0 0,clip]{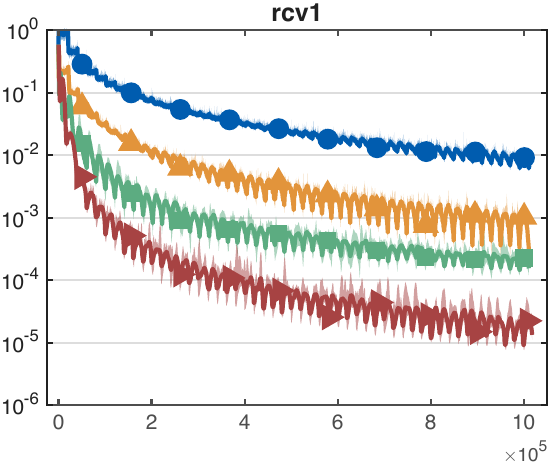}};
\node[right] at (8,0) {\includegraphics[width=3.7cm,trim=0 0 0 0,clip]{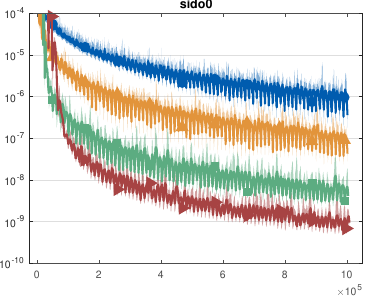}};
\node[right] at (12,0){\includegraphics[width=3.7cm,trim=0 0 0 0,clip]{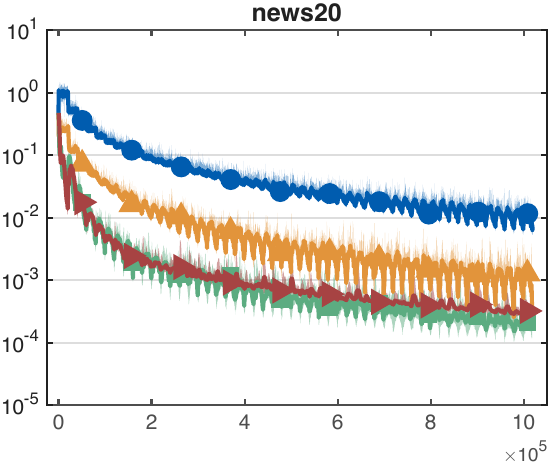}};
\end{tikzpicture} 
\caption{Performance of $\RRM$ with $\alpha_k = 1/(Lk)$, $\beta=0.9$, $\lambda = 0$ for different batch sizes $b$. The $x$-axis denotes number of gradient evaluations $\nabla f_i$, and the $y$-axis denotes the training error $(f(x^k)-f^*)/\min\{1,f^*\}$. Averaged over $10$ independent runs.}
\label{fig:prelim-batch}
\end{figure}

\begin{figure}[t]
\centering
\setlength{\abovecaptionskip}{-3pt plus 3pt minus 0pt}
\setlength{\belowcaptionskip}{-10pt plus 3pt minus 0pt}
\begin{tikzpicture}
\node[right] at (3,2) {\includegraphics[width=9cm,trim=110 360 105 120,clip]{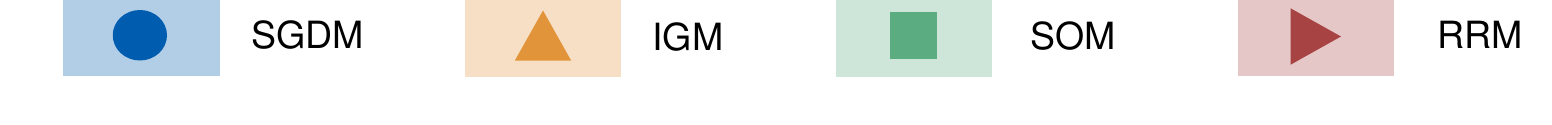}};
\node[right] at (0,0) {\includegraphics[width=3.7cm,trim=0 0 0 0,clip]{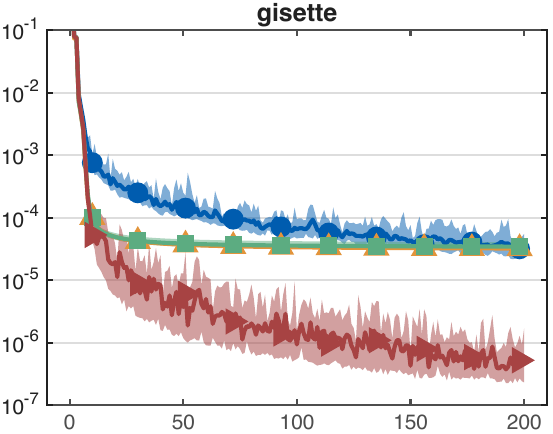}};
\node[right] at (4,0) {\includegraphics[width=3.7cm,trim=0 0 0 0,clip]{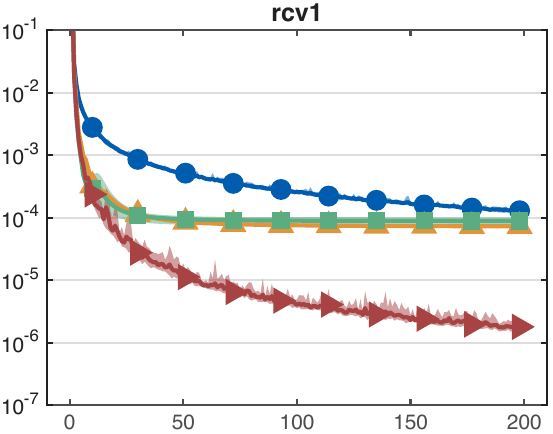}};
\node[right] at (8,0) {\includegraphics[width=3.7cm,trim=0 0 0 0,clip]{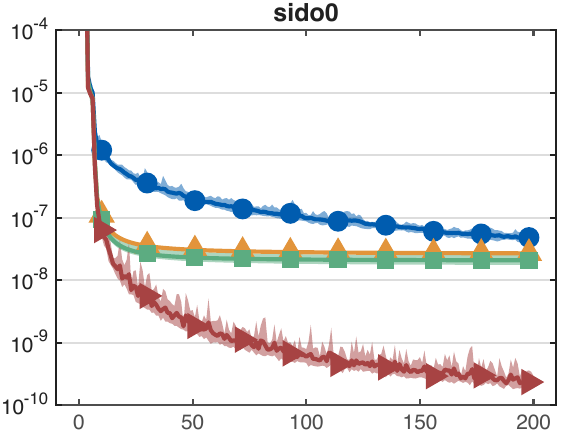}};
\node[right] at (12,0){\includegraphics[width=3.7cm,trim=0 0 0 0,clip]{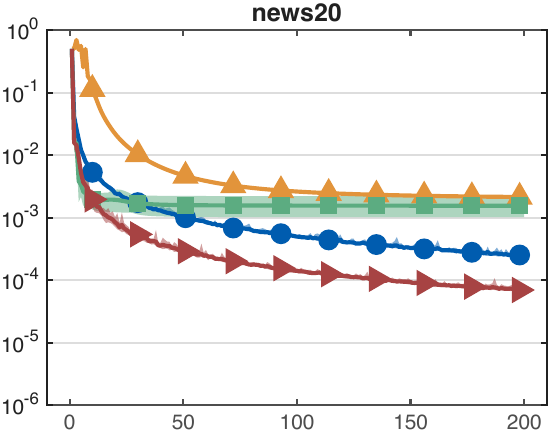}};
%
\end{tikzpicture}
\caption{Preliminary numerical results for solving \eqref{eq:binary-clas-prelim} with different stochastic algorithms. The $x$-axis denotes the epoch $k$, and the $y$-axis denotes the relative training error $(f(x^k)-f^*)/\min\{1,f^*\}$. Averaged over $10$ independent runs.}
\label{fig:prelim-sampling}
\end{figure}
\par


     \section{Conclusion and future directions}
This paper presents a comprehensive convergence analysis of the random reshuffling method with momentum ($\RRM$) with a primary focus on the nonconvex setting. Our theoretical framework and convergence results apply to arbitrary permutation schemes and momentum parameters $\beta\in [0,1)$ and $\lambda \in [0, \frac{\beta}{1-\beta}]$. As a consequence, our findings extend naturally to the incremental gradient method with momentum ($\IGM$). Under standard Lipschitz smoothness assumptions, we establish the first iteration complexity bounds for $\RRM$ that can fully match the existing complexity results for the basic random reshuffling method ($\RR$). Furthermore, we provide asymptotic last-iterate convergence guarantees under mild assumptions on the step sizes and objective function. Our KL-based techniques allow us to circumvent typical a priori boundedness conditions and seem of broader interest for the analysis of other stochastic optimization methods and momentum algorithms.

There are several possible future directions that may advance this line of research. First, it would be interesting to explore saddle point avoidance properties of $\RRM$ (see, e.g., \cite{pemantle1990nonconvergence,lee2019first} for saddle point avoidance results of first-order and stochastic approximation methods). Together with our iterate convergence guarantees, this would allow ensuring convergence to local minimizers. In addition, an examination of the asymptotic normality properties of $\RRM$, cf.\ \cite{fabian1968asymptotic}, can provide further insight into the statistical features of the generated iterates. Another interesting direction is to investigate whether the Nesterov variant of $\RRM$ admits improved complexity guarantees in the convex or strongly convex setting. Such a result would likely require a more nuanced analysis of the objective function and Lyapunov sequence for adaptive momentum parameters.

As $\RRM$ represents a special case of Adam \cite{kingma2014adam}, we anticipate that our analytical techniques for $\RRM$ may facilitate the derivation of more refined complexity results for Adam-type optimizers. In fact, most of the analyses of Adam require the stochastic gradients to be \emph{unbiased estimators}, \cite{kingma2014adam,reddi2019convergence,defossez2020simple}. However, practical implementations of Adam-type methods again use \emph{without-replacement} (shuffling-based) sampling schemes which induce \emph{bias}. The few existing works that account for such sampling strategies typically establish complexity bounds of the form $\mathcal{O}(1/\sqrt{T} + D)$ under the growth condition $\sum_{i=1}^n \|\nabla f_i(x)\|^2 \leq C\|\nabla f(x)\|^2 + D$ for some $C,D\geq 0$, \cite{shi2021rmsprop,zhang2022adam,wang2024provable}. Despite these remarkable advances and to the best of our knowledge, the current results seemingly cannot achieve the prototypical $\mathcal{O}(1/T^{2/3})$-bounds of basic shuffling methods.

\appendix
\crefalias{section}{appendix}
\renewcommand\theHsection{\Alph{section}}          
\renewcommand\theHsubsection{\Alph{section}.\arabic{subsection}}
\section{Preparatory tools}

In the following, we compute the variance of sampling a weighted collection of vectors from a finite set of vectors without replacement. In our analysis, the weights will frequently correspond to certain powers of the momentum parameter $\beta$ and hence, \cref{lemma:sampling} will play a key role in our theoretical derivations.

\begin{lemma}[Weighted sampling] \label{lemma:sampling}
Let $X_1,\dots,X_n \in \R^d$ and $w \in \R_+^t$, $t \in [n]$, be given and let $\bar X := \frac{1}{n} \sum_{i=1}^n X_i$ and $\sigma^2 := \frac{1}{n}\sum_{i=1}^n \|X_i - \bar X\|^2$ denote the associated average and population variance. Let $X_{\pi_1},\dots,X_{\pi_t}$ be sampled uniformly without replacement from $\{X_1,\dots, X_n\}$. Then, it holds that
\[
   \Exp\Big[\Big\| {\sum}_{i=1}^t w_i X_{\pi_i} - \Big({\sum}_{i=1}^t w_i \Big) \cdot \bar X \Big\|^2 \Big] \leq \|w\|^2\sigma^2.
\]
\end{lemma}

\begin{proof} As shown in \cite[Lemma 1]{mishchenko2020random}, we have $\Exp[\iprod{X_{\pi_i}-\bar X}{X_{\pi_j}-\bar X}] = -\frac{\sigma^2}{n-1}$ for all $i$, $j$ with $i \neq j$ and $\Exp[\|X_{\pi_i}-\bar X\|^2] = \sigma^2$. This yields
%
\begin{equation*}
\begin{aligned}
    &\hspace{-1ex}\Exp\left[\left\| {\sum}_{i=1}^t w_i X_{\pi_i} - \left({\sum}_{i=1}^t w_i \right) \cdot \bar X \right\|^2 \right] & \\ &  = \Exp\left[\left\| {\sum}_{i=1}^t w_i [X_{\pi_i} - \bar X] \right\|^2 \right]  = {\sum}_{i=1}^t {\sum}_{j=1}^t w_iw_j \Exp[\iprod{X_{\pi_i}-\bar X}{X_{\pi_j}-\bar X}] \\ &  = {\sum}_{i=1}^t w_i^2 \Exp[\|X_{\pi_i}-\bar X\|^2] - \left({\sum}_{i, j = 1, i \neq j}^t w_iw_j\right) \frac{\sigma^2}{n-1} 
\leq \|w\|^2 \sigma^2,
\end{aligned}
\end{equation*}
where the last inequality follows by dropping the nonpositive term.
\end{proof}  

%
Next, we present a weighted estimate used to handle the unequal gradient weights in $\RRM$.

\begin{lemma} \label{lemma:jensen} Let $u_1,\dots, u_t \in \Rn$ and $\lambda_1, \dots, \lambda_t \in \R_+$ be given vectors and scalars for some $t \in \N$. Then, it holds that 
\begin{equation*} \Big\|{\sum}_{i=1}^{t} \lambda_i u_i \Big\|^2 \leq {\sum}_{i=1}^t \lambda_i \cdot {\sum}_{i=1}^t \lambda_i \|u_i\|^2.\end{equation*}
\end{lemma}

\begin{proof}

If $\sum_{i=1}^t\lambda_i=0$, the claim is trivial. Otherwise, Jensen's inequality applied to $x \mapsto \|x\|^2$ gives
  \[
  \left\|\frac{\sum_{i=1}^t\lambda_i u_i}{\sum_{i=1}^t\lambda_i}\right\|^2 \leq  \frac{{\sum}_{i=1}^t \lambda_i\|u_i\|^2}{\sum_{i=1}^t\lambda_i}.
  \]
Multiplying both sides by $\left(\sum_{i=1}^t\lambda_i\right)^2$ yields the result.
\end{proof}

The following technical result, known as Kronecker's lemma, will be instrumental in our asymptotic complexity analysis, cf.\ \cite[Chapter 4, Section 3, Lemma 2]{shiryaev1996probability}.

\begin{lemma}[Kronecker's lemma] \label{lemma:kronecker} Let $\{r_k\}_k$ $\subset \R$ be given with $|\sum_{k=1}^\infty r_k | < \infty$ and let $\{s_k\}_k$ $\subset \R_{++}$ be a non-decreasing sequence with $s_k \to \infty$. Then, we have $s_k^{-1} {\sum}_{i=1}^k r_i s_i \to 0$.
\end{lemma}
 
\section{Key lemmas}\label{sec:key-lemmas}
In the following, we will use the term $\Exp_k[\cdot] := \Exp[\cdot | \cF_k]$ to denote the conditional expectation with respect to the $\sigma$-sub-algebra  $\mathcal F_k := \sigma\big(x^1,\{\pi^\ell,y_i^\ell:1\leq \ell \leq k-1,\;0\leq i\leq m+1\}\big)$.
In particular, $x^k$, $\tilde x^k$, and $z^k$ are $\mathcal F_k$-measurable, while the new permutation $\pi^k$ is sampled after conditioning on $\mathcal F_k$.

We now present several important results and key estimates that will be used to establish an approximate descent property for $\RRM$. We first derive bounds involving the stochastic gradient steps $d_i^k = \cG^k_i(\nes_{i}^{k})$, $i = 1,\dots, m$.

\begin{lemma}\label{lemma:weighted sum grad}
Let $\{x^k\}_k$, $\{\nes_i^k\}_{k,i\in[m]}$, and $\{d_i^k\}_{k,i\in[m]}$ be generated by $\RRM$ with mini-batch size $b$, $\beta\in[0,1)$, $\lambda \in \R$, $\{\alpha_k\}_k \subset \R_{++}$, and let $\{z^k\}_k$ be given as in \eqref{eq:def-z}. Let condition \ref{A1} hold. Then, the following statements are valid:  
\begin{enumerate}[label=\textup{(\alph*)},topsep=0pt,itemsep=0ex,partopsep=0ex,leftmargin=5ex]
    \item For all $i\in[m]$, $k\geq 1$, $w = (w_1,\dots,w_i)^\top \in \R_+^i$, and recalling $n = mb$, it holds that:
\[
    \Big\|{\sum}_{t=1}^i w_t d_t^k \Big\|^2 \leq 3 \|w\|_1\|w\|_\infty \Big[ \sL^2 {\sum}_{t=1}^m \|\nes_t^k-z^k\|^2 + m \sigma_k^2 \Big] + 3 \|w\|_1^2 \|\nabla f(z^k)\|^2.
\]
  \item In addition, if \ref{S1} is satisfied, then we have: 
\[
    \Exp_k\Big[\Big\|{\sum}_{t=1}^i w_t d^k_t \Big\|^2\Big] \leq 3 \|w\|_1\|w\|_\infty \Big[ \sL^2 {\sum}_{t=1}^m \Exp_k[\|\nes_t^k-z^k\|^2] + b^{-1} \sigma_k^2 \Big] + 3 \|w\|_1^2 \|\nabla f(z^k)\|^2.
  \]
\end{enumerate}
Here, the variance term $\sigma_k^2$ is given by $\sigma_k^2:=\frac1n{\sum}_{t=1}^n \|\nabla f_{t}(z^k) - \nabla f(z^k)\|^2$. 
\end{lemma}
 
\begin{proof}
Using the triangle inequality and the Lipschitz continuity of $\cG^k_t$, we obtain
\begin{equation*}
    \begin{aligned}
        &\hspace{5mm}\Big\|{\sum}_{t=1}^i w_t d^k_t \Big\|\\ & \leq \Big\|{\sum}_{t=1}^i  w_t  [\cG^k_t(\nes_{t}^{k}) - \cG^k_t(z^k)]\Big\| + \Big\|{\sum}_{t=1}^i w_t [\cG^k_t(z^k) - \nabla f(z^k)]\Big\| +\|w\|_1\|\nabla f(z^k)\| \\
& \leq \sL {\sum}_{t=1}^i w_t \|\hat y_{t}^{k} - z^k\| + \Big\|{\sum}_{t=1}^i w_t [\cG^k_t(z^k) - \nabla f(z^k)]\Big\| + \|w\|_1\|\nabla f(z^k)\|. 
    \end{aligned}
\end{equation*}
Taking squares on both sides, using $(\sum_{t=1}^3 b_t)^2 \leq 3 \sum_{t=1}^3 b_t^2$, and the definition of $\cG_t^k$, we have

\begin{equation*}
\begin{aligned}
    \frac13 \Big\|{\sum}_{t=1}^i w_t d^k_t \Big\|^2 & \leq \sL^2 \Big[{\sum}_{t=1}^i w_t\|\nes_t^k - z^k\|\Big]^2 + \frac{1}{b^2}\Big\|{\sum}_{t=1}^{ib} \tilde w_t [\nabla f_{\pi_t^k}(z^k) - \nabla f(z^k)]\Big\|^2 \\ & \hspace{4ex} +\|w\|_1^2\|\nabla f(z^k)\|^2,
\end{aligned}
\end{equation*}
where $\tilde w_t := w_\ell$ for $t = (\ell-1) b +1, \dots,\ell b$.
Invoking \cref{lemma:jensen}, it holds that $[{\sum}_{t=1}^i w_t\|\nes_t^k - z^k\|]^2 \leq \|w\|_1 \sum_{t=1}^i w_t \|\nes_t^k - z^k\|^2 \leq \|w\|_1 \|w\|_\infty \sum_{t=1}^i \|\nes_t^k - z^k\|^2 $ and
\begin{equation*}
\begin{aligned} \Big\|{\sum}_{t=1}^{ib} \tilde w_t [\nabla f_{\pi_t^k}(z^k) - \nabla f(z^k)]\Big\|^2 & \leq b \|w\|_1 {\sum}_{t=1}^{ib} \tilde w_t \|\nabla f_{t}(z^k) - \nabla f(z^k)\|^2 \\ & \leq n b \|w\|_1\|w\|_\infty \sigma_k^2 = m b^2 \|w\|_1\|w\|_\infty \sigma_k^2. 
\end{aligned}
\end{equation*}
Moreover, if condition \ref{S1} is satisfied, then \cref{lemma:sampling} is applicable and taking conditional expectation, we can infer $\Exp_k[\|{\sum}_{t=1}^{ib} \tilde w_t [\nabla f_{\pi_t^k}(z^k) - \nabla f(z^k)]\|^2 ] \leq b\|w\|^2\sigma_k^2 \leq b\|w\|_1\|w\|_\infty\sigma_k^2$. This completes the proof of \cref{lemma:weighted sum grad}. 
\end{proof}

Next, we provide an explicit update formula for the iterates $y_{i}^k$, $i=2,\dots,m+1$.

\begin{lemma}[Update rule]\label{lemma:update}
Let $\{x^k\}_k$, $\{\tilde x^k\}_{k}$, $\{y_i^k\}_{k,i\in[m+1]}$, and $\{d_i^k\}_{k,i\in[m]}$ be generated by $\RRM$ with mini-batch size $b$, $\beta\in[0,1)$, $\lambda \in \R$, and $\{\alpha_k\}_k \subset \R_{++}$. Then, it holds that
    \[y_{i+1}^k - x^k = - \alpha_k {\sum}_{t=1}^i \frac{1-\beta^{i-t+1}}{1-\beta} \cdot d^k_t + \frac{\beta(1-\beta^i)}{1-\beta}\cdot (x^k-\tilde x^k), \quad i \in [m]. \]
\end{lemma}

\begin{proof} If $\beta=0$, then $y_{i+1}^k-x^k=-\alpha_k\sum_{t=1}^{i}d_t^k$. It remains to consider the case $\beta>0$. Summing the iterative update of $\RRM$, it follows 
\begin{equation*}
\begin{aligned}
    y_{i+1}^k - x^k &= - \alpha_k {\sum}_{j=1}^i d^k_j + \beta (y_i^k -\tilde x^k) = - \alpha_k {\sum}_{j=1}^i d^k_j + \beta (y_i^k - x^k) +  \beta(x^k - \tilde x^k)
\end{aligned}
\end{equation*}
for all $i \in [m]$. Dividing both sides by $\beta^{i+1}$ and defining $v_i^k:=\alpha_k {\sum}_{t=1}^i d^k_t$, we further obtain
\begin{equation*} 
    \frac{y_{i+1}^k - x^k}{\beta^{i+1}} = \frac{y_{i}^k - x^k}{\beta^{i}} - \frac{v_i^k}{\beta^{i+1}} + \frac{x^k - \tilde x^k}{\beta^{i}} = \dots= - \sum_{j=1}^{i} \; \frac{v_j^k}{\beta^{j+1}} +  \sum_{j=1}^{i} \;\frac{x^k - \tilde x^k}{\beta^{j}},
\end{equation*}
which yields $y_{i+1}^k - x^k = - {\sum}_{j=1}^i \beta^{i-j} v_j^k + \beta{\sum}_{j=1}^{i}\beta^{i-j}(x^k - \tilde x^k)$ (after multiplying both sides of the previous equation with $\beta^{i+1}$). Hence, using $\sum_{j=1}^{i} \beta^{i-j} = \sum_{t=0}^{i-1} \beta^t = \frac{1-\beta^i}{1-\beta}$, it holds that
\begin{equation*}
\begin{aligned}
y_{i+1}^k - x^k    &= - \alpha_k {\sum}_{j=1}^i \beta^{i-j}{\sum}_{t=1}^j \;d^k_t + \frac{\beta(1-\beta^i)}{1-\beta}\cdot (x^k-\tilde x^k)\\
    &= - \alpha_k {\sum}_{t=1}^i \frac{1-\beta^{i-t+1}}{1-\beta} \cdot d^k_t + \frac{\beta(1-\beta^i)}{1-\beta}\cdot (x^k-\tilde x^k), 
\end{aligned}
\end{equation*}
which finishes the proof.
\end{proof}

\begin{lemma}\label{lemma:sum y-z}
Let $\{x^k\}_k$, $\{\hat y_i^k\}_{k,i\in[m]}$ be generated by $\RRM$ with mini-batch size $b$, $\beta\in[0,1)$, $\lambda \in [0,\frac{\beta}{1-\beta}]$, $\alpha_k \in (0,\frac{1-\beta}{\sqrt{8}\sL m}]$, and let $\{z^k\}_k$ be set as in \eqref{eq:def-z}. Define
\[
\chi := \chi(\beta,\lambda) :=
\begin{cases}
1-\frac{1-\beta}{\beta}\lambda, & \text{if}\;\;\beta\in(0,1)\;\;\text{and}\;\;\lambda\in[0,\frac{\beta}{1-\beta}],\\
0, & \text{if}\;\;\beta=0\;\;\text{and}\;\;\lambda=0.
\end{cases}
\]
If \ref{A1} holds, then:
\begin{enumerate}[label=\textup{(\alph*)},topsep=1ex,itemsep=0ex,partopsep=0ex, leftmargin = 5ex]
\item For all $k\geq 1$, we have
\[ {\sum}_{t=1}^{m}\|\hat y_{t}^k - z^k\|^2  \leq \frac{5m}{4}\Big[ \chi \|z^k-x^k\|^2 + \frac{m^2\alpha_k^2}{(1-\beta)^2} \|\nabla f(z^k)\|^2 + \frac{3\sL m^2\alpha_k^2}{(1-\beta)^2}[f(z^k) - \bar f]\Big]. \] 
\item In addition, if \ref{S1} is valid, it holds that
\[ {\sum}_{t=1}^{m} \Exp_k[\|\nes_{t}^k - z^k\|^2]  \leq \frac{5m}{4}\Big[\chi\|z^k-x^k\|^2 + \frac{m^2\alpha_k^2}{(1-\beta)^2} \|\nabla f(z^k)\|^2 + \frac{3\sL m\alpha_k^2}{b(1-\beta)^2}[f(z^k) - \bar f]\Big].\] 
\end{enumerate}
\end{lemma}

\begin{proof}  Applying \cref{lemma:update} and using $\frac{\beta}{1-\beta}(x^k-\tilde x^k) = z^k-x^k$, we have
\begin{equation}\label{eq:app:nes-z-identity}
\begin{aligned} \nes^k_{i+1} - z^k &= (1+\lambda) [y_{i+1}^k-x^k-(z^k-x^k)] - \lambda [y_i^k-x^k-(z^k-x^k)] \\ &= (1+\lambda)\Big[-\frac{\alpha_k}{1-\beta} {\sum}_{t=1}^i (1-\beta^{i-t+1})d_t^k - \beta^i(z^k-x^k) \Big] \\ & \hspace{4ex} + \lambda \Big[\frac{\alpha_k}{1-\beta} {\sum}_{t=1}^{i-1} (1-\beta^{i-t})d_t^k + \beta^{i-1}(z^k-x^k) \Big] \\ & = -\alpha_k {\sum}_{t=1}^i \Big[\frac{1-\beta^{i-t+1}}{1-\beta}+\lambda\beta^{i-t}\Big] \cdot d_t^k - \beta^{i-1}[\beta-(1-\beta)\lambda](z^k-x^k) \end{aligned}
\end{equation}
for $i = 1,\dots,m-1$. Setting $w_t := \frac{1-\beta^{i-t+1}}{1-\beta}+\lambda\beta^{i-t}$, $t \in [i]$, and using $\|u_1+u_2\|^2 \leq (1+\rho)\|u_1\|^2 + (1+\rho^{-1})\|u_2\|^2$, $u_1, u_2 \in \Rn$, $\rho > 0$, it follows
\begin{equation} \label{eq:app:esti-01} \|\nes^k_{i+1} - z^k\|^2 \leq (1+\rho)\beta^{2(i-1)}|\beta-(1-\beta)\lambda|^2 \|z^k-x^k\|^2 + (1+\rho^{-1}){\alpha_k^2} \Big\|{\sum}_{t=1}^i w_td_t^k\Big\|^2. \end{equation}
Since $\lambda \geq 0$, we have $w_t \geq 0$, and using $\lambda \leq \frac{\beta}{1-\beta}$, we obtain 
\[ w_t = \frac{1-\beta^{i-t}[\beta-(1-\beta)\lambda]}{1-\beta} \leq \frac{1-\beta^{i-1}[\beta-(1-\beta)\lambda]}{1-\beta} \quad \forall~t \in [i]. \] 
By the definition of $\chi$, this implies $\|w\|_\infty \leq \frac{1-\beta^i\chi}{1-\beta}$. If $\chi=0$, then $\beta-(1-\beta)\lambda=0$, and \eqref{eq:app:nes-z-identity} reduces to $\nes^k_{i+1}-z^k=-\alpha_k\sum_{t=1}^{i}w_td_t^k$. If $\chi>0$, choosing $\rho = \frac{1-\beta^i\chi}{\beta^i\chi}$ gives $1+\rho = \frac{1}{\beta^i\chi}$ and $1+\rho^{-1} = \frac{1}{1-\beta^i\chi}$. In both cases, applying \cref{lemma:weighted sum grad} (a) with $\|w\|_1 \leq i\|w\|_\infty$ yields
\[\|\nes^k_{i+1} - z^k\|^2  \leq \beta^{i} \chi \|z^k-x^k\|^2 + \frac{3(1-\beta^i\chi)\alpha_k^2}{(1-\beta)^2} \Big[\sL^2i {\sum}_{t=1}^{m}\|\nes_t^k-z^k\|^2 + mi\sigma_k^2 + i^2 \|\nabla f(z^k)\|^2 \Big]. \]
Summing this estimate for $i=1,\ldots,m-1$ and using $\nes_1^k - z^k = \chi(x^k-z^k)$, $\beta < 1$, and $\chi \in [0,1]$, we obtain
\begin{equation*}
\begin{aligned}
   {\sum}_{t=1}^{m}\|\nes_{t}^k - z^k\|^2 & = {\sum}_{i=1}^{m-1}\|\nes_{i+1}^k - z^k\|^2 + \|\nes_{1}^k - z^k\|^2  \\ & \hspace{-9ex} \leq \chi m \|z^k-x^k\|^2 + \frac{3m^2\alpha_k^2 }{2(1-\beta)^2}\Big[\sL^2{\sum}_{t=1}^m\|\nes_t^k - z^k\|^2 + m\sigma_k^2 \Big]+  \frac{m^3\alpha_k^2}{(1-\beta)^2}\|\nabla f(z^k)\|^2 , 
\end{aligned} 
\end{equation*}
where the last line uses $\sum_{i=1}^{m-1}i \leq \frac{m^2}{2}$ and $\sum_{i=1}^{m-1} i^2 \leq \frac{m^3}{3}$. Noting $\frac{3\sL^2m^2\alpha_k^2}{2(1-\beta)^2} \leq \frac{3}{16} < \frac15$ and rearranging the former estimate, it holds that
\[
    {\sum}_{t=1}^{m}\|\nes_{t}^k - z^k\|^2  \leq \frac{5\chi m}{4}\|z^k-x^k\|^2 + \frac{5m^3\alpha_k^2}{4(1-\beta)^2}\|\nabla f(z^k)\|^2 + \frac{15m^3\alpha_k^2}{8(1-\beta)^2}\sigma_k^2.
\]
The conclusion follows from $\sigma_k^2 \leq 2\sL[f(z^k) - \bar f]$. In order to prove part (b), we take conditional expectation in \eqref{eq:app:esti-01} and invoke \cref{lemma:weighted sum grad} (b). Mimicking our earlier steps, this yields
\[ \Exp_k[\|\nes^k_{i+1} - z^k\|^2] \leq \chi \|z^k-x^k\|^2 + \frac{3\alpha_k^2}{(1-\beta)^2} \Big[\sL^2i \sum_{t=1}^{m}\Exp_k[\|\nes_t^k-z^k\|^2] + \frac{i\sigma_k^2}{b} + i^2 \|\nabla f(z^k)\|^2 \Big]. \]
We can now simply repeat the last steps and derivations to establish the bound in (b).
\end{proof}
 
Finally, in \Cref{lemma:sum z-x}, we present a recursive expression for the terms $\|z^k-x^k\|$, $k \in \N$.

\begin{lemma}\label{lemma:sum z-x}
We consider the same setting as in \cref{lemma:sum y-z} with $\alpha_k \leq \frac{(1-\beta)(1-\beta^m)}{4\sL m}$. 
\begin{enumerate}[label=\textup{(\alph*)},topsep=1ex,itemsep=0ex,partopsep=0ex, leftmargin = 5ex]
\item Setting $\eta := \frac{1+2\beta^m}{3} \in [\frac13,1)$, it holds that:
\[
\|z^{k+1} - x^{k+1}\|^2 \leq  \eta \|z^k  -x^k\|^2 + \frac{\beta^2m^2\alpha_k^2\{4\|\nabla f(z^k)\|^2 + 7\sL[f(z^k) - \bar f]\}}{(1-\beta)^2(1-\beta^m)} \quad \forall~k \geq 1.
\]
\item In addition, if \ref{S1} is valid, we have for all $k\geq 1$ that
\[
    \Exp_k[\|z^{k+1} - x^{k+1}\|^2] \leq  \eta \|z^k  -x^k\|^2 + \frac{\beta^2m^2\alpha_k^2\{4\|\nabla f(z^k)\|^2 + 7n^{-1}\sL[f(z^k) - \bar f]\}}{(1-\beta)^2(1-\beta^m)}.
\]
\end{enumerate}
\end{lemma}

\begin{proof} If $\beta=0$, then $\lambda=0$ and $z^k=x^k$ for all $k$. Hence, $\|z^{k+1}-x^{k+1}\|=\|z^k-x^k\|=0$. Therefore, we may now assume $\beta>0$. By the update scheme of $\RRM$, we have
\begin{equation*}
\begin{aligned}  x^{k+1} - \tilde x^{k+1} & = y_{m+1}^k - y_m^k = - \alpha_k d^k_m + \beta (y_{m}^k - y_{m-1}^k) \\  & = - \alpha_k d^k_m + \beta [- \alpha_k d^k_{m-1} + \beta (y_{m-1}^k - y_{m-2}^k)]  = \dots  \\  & = - \alpha_k {\sum}_{j=1}^{m} \beta^{m-j} d^k_j + \beta^{m}(y_1^k - y_0^k) =  - \alpha_k {\sum}_{j=1}^{m} \beta^{m-j}d^k_j + \beta^{m}(x^{k} - \tilde x^{k}).
\end{aligned} 
\end{equation*}
Thanks to $z^k-x^k = \frac{\beta}{1-\beta}(x^k - \tilde x^k)$, the previous relation implies $\|z^{k+1} - x^{k+1}\| \leq  \beta^{m}\|z^{k} - x^{k}\| + \frac{\beta \alpha_k}{1-\beta}\|{\sum}_{j=1}^{m} \beta^{m-j} d^k_j\|$. Squaring both sides, using $(u+v)^2 \leq (1+\rho)u^2 + (1+\rho^{-1})v^2$ with $\rho=\frac{1-\beta^m}{\beta^m}$, and applying \cref{lemma:weighted sum grad} (a) directly, this yields
\begin{equation*}
\begin{aligned}
&\hspace{-2ex}\|z^{k+1} - x^{k+1}\|^2 \leq \beta^m\|z^{k} - x^{k}\|^2 + \frac{\beta^2\alpha_k^2}{(1-\beta)^2(1-\beta^m)}\Big\|{\sum}_{j=1}^{m} \beta^{m-j} d^k_j\Big\|^2\\
& \leq \beta^m\|z^{k} - x^{k}\|^2 + \frac{3\beta^2m\alpha_k^2}{(1-\beta)^2(1-\beta^m)} \Big[ \sL^2 {\sum}_{j=1}^m \|\nes_j^k-z^k\|^2 + m \sigma_k^2 + m \|\nabla f(z^k)\|^2 \Big] \\
&\leq \Big[ \beta^m + \frac{15\sL^2 \chi\beta^2 m^2 \alpha_k^2}{4(1-\beta)^2(1-\beta^m)}\Big]\|z^k  -x^k\|^2 \\
&\hspace{4ex}+\frac{3 \beta^2m^2\alpha_k^2}{(1-\beta)^2(1-\beta^m)}\Big\{\Big[1+\frac{5\sL^2m^2 \alpha_k^2}{4(1-\beta)^2} \Big]\|\nabla f(z^k)\|^2 + \Big[2+\frac{15\sL^2m^2 \alpha_k^2}{4(1-\beta)^2} \Big]\sL[f(z^k) - \bar f] \Big\},
\end{aligned}
\end{equation*}
where the last inequality is by \cref{lemma:sum y-z} (a) and $\sigma_k^2 \leq 2\sL[f(z^k) - \bar f]$. Since $\alpha_k \leq \frac{(1-\beta)(1-\beta^m)}{4\sL m}$ and $\chi \in [0,1]$, we have $\frac{5\sL^2 m^2\alpha_k^2}{4(1-\beta)^2}\leq\frac{5}{64} < \frac{1}{12}$, $\frac{15\sL^2 m^2\alpha_k^2}{4(1-\beta)^2}\leq\frac{15}{64} < \frac13$, and
\[\beta^m + \frac{15\sL^2\chi\beta^2 m^2 \alpha_k^2}{4(1-\beta)^2(1-\beta^m)} \leq \beta^m + \frac{\chi\beta^2(1-\beta^m)}{3} \leq \frac{1 + 2\beta^m}{3}=:\eta. 
\]
Therefore, combining the former estimates, we can infer
\[
\|z^{k+1} - x^{k+1}\|^2 \leq  \eta \|z^k  -x^k\|^2 + \frac{\beta^2m^2\alpha_k^2\{4\|\nabla f(z^k)\|^2 + 7\sL[f(z^k) - \bar f]\}}{(1-\beta)^2(1-\beta^m)}.
\]
This bound can be improved under assumption \ref{S1}. In particular, taking the conditional expectation, applying \Cref{lemma:weighted sum grad} (b) and \Cref{lemma:sum y-z} (b), and repeating the previous steps, we can readily obtain the bound shown in (b).
\end{proof}

\section{Proof of \texorpdfstring{\Cref{prop:approximate descent}}{Proposition 3.4}} \label{subsec:proof approximate descent}

\begin{proof} As shown in \eqref{eq:zk-works}, it holds that $(1-\beta) z^{k+1} = (1-\beta) z^k - \alpha_k {\sum}_{i=1}^m d^k_i$.
%
%
Applying the $\sL$-smoothness of $f$ and this relation, we obtain the following:
\begin{equation*}
\begin{aligned}
         f(z^{k+1}) &\leq f(z^k) - \langle \nabla f(z^k) , z^k - z^{k+1}  \rangle + \frac{\sL}{2}\|z^{k+1} - z^{k}\|^2\\
    &=  f(z^k) - \frac{m\alpha_k}{1-\beta} \Big\langle \nabla f(z^k) , \underbracket[.75pt][4pt]{m^{-1}{\sum}_{i=1}^m\, d^k_i }_{=:\,g^k}\Big\rangle + \frac{\sL}{2}\|z^{k+1} - z^{k}\|^2.
\end{aligned} 
\end{equation*}
Using $ -\big\langle \nabla f(z^k) , g^k \big\rangle = -  \tfrac12(\|\nabla f(z^k)\|^2 - \|\nabla f(z^k) - g^k\|^2)  -  \tfrac12 \|g^k \|^2$ and $g^k= \frac{1-\beta}{m\alpha_k} (z^k - z^{k+1})$, this yields
\begin{equation}
\begin{aligned}
    f(z^{k+1}) - f(z^k) & \leq - \frac{m\alpha_k}{2(1-\beta)}[\|\nabla f(z^k)\|^2-\|\nabla f(z^k) - g^k\|^2] - \Big[\frac{1-\beta}{2m\alpha_k} - \frac{\sL}{2}\Big]\|z^{k+1} - z^{k}\|^2 \\&\leq  - \frac{m\alpha_k}{2(1-\beta)}[\|\nabla f(z^k)\|^2-\|\nabla f(z^k) - g^k\|^2] - \frac{1-\beta}{4m\alpha_k}\|z^{k+1} - z^{k}\|^2,\label{eq:prop descent -1}
\end{aligned} 
\end{equation}
where the last line is due to $\frac{\sL}{2} \leq \tfrac{1-\beta}{4m\alpha_k}$. Next, we bound $ \|\nabla f(z^k) - g^k\|^2$. Using $d_i^k = \cG_i^k(\nes_i^k)$, $\|\sum_{i=1}^m u_i\|^2 \leq m \sum_{i=1}^m \|u_i\|^2$, $u_i \in \Rn$, and the $\sL$-continuity of $\cG_i^k:\Rn\to\Rn$, it follows that
\[
\|\nabla f(z^k) - g^k\|^2 = \frac{1}{m^2} \big\|{\sum}_{i=1}^m[\cG_i^k(\nes_i^k) - \cG_i^k(z^k)] \big\|^2 
    \leq \frac{\sL^2}{m} {\sum}_{i=1}^m \|\nes_i^k -z^k\|^2.
\]
Combining this estimate with \eqref{eq:prop descent -1} and subtracting $\bar f$, we have
\begin{equation}
    \label{eq:prop descent 0}
    \begin{aligned}
f(z^{k+1}) - \bar f \leq f(z^k) -\bar f &+ \frac{\sL^2\alpha_k}{2(1-\beta)} {\sum}_{i=1}^m \|\nes_i^k -z^k\|^2 \\ &- \frac{m\alpha_k}{2(1-\beta)}\|\nabla f(z^k)\|^2 - \frac{1-\beta}{4m\alpha_k}\|z^{k+1} - z^{k}\|^2.
\end{aligned}
\end{equation}
We now prove part (a). Applying \cref{lemma:sum y-z} (a), we can rewrite \eqref{eq:prop descent 0} as follows:
\begin{equation}
    \label{eq:prop descent 1}
    \begin{aligned}
[f(z^{k+1}) - \bar f] 
&\leq \Big[1+\frac{15\sL^3 m^3\alpha_k^3}{8(1-\beta)^3}\Big][f(z^k) -\bar f] + \frac{5\sL^2\chi m \alpha_k}{8(1-\beta)}\|z^k-x^k\|^2\\ & \hspace{4ex} - \frac{m\alpha_k}{2(1-\beta)}\Big[1-\frac{5\sL^2m^2\alpha_k^2}{4(1-\beta)^2} \Big]\|\nabla f(z^k)\|^2 - \frac{1-\beta}{4m\alpha_k}\|z^{k+1} - z^{k}\|^2.
\end{aligned}
\end{equation}
Using $\alpha_k \leq \frac{1-\beta}{4\sL m}$ and $\|u+v\|^2 \geq \frac12\|u\|^2 - \|v\|^2$, $u,v\in\Rn$, we have $\frac{5\sL^2m^2\alpha_k^2}{4(1-\beta)^2} \leq \frac{5}{64} \leq \frac{1}{12}$ and 
\begin{equation*}
\begin{aligned}
\Big[1-\frac{5\sL^2m^2\alpha_k^2}{4(1-\beta)^2} \Big]\|\nabla f(z^k)\|^2 
&\geq \frac23\|\nabla f(z^k)\|^2 + \frac14\|\nabla f(z^k) - \nabla f(x^k) + \nabla f(x^k)\|^2 \\&\geq \frac23\|\nabla f(z^k)\|^2 + \frac{1}{8}\|\nabla f(x^k)\|^2 - \frac{\sL^2}{4}\|x^k-z^k\|^2.
\end{aligned}
\end{equation*}
Combining this bound with \eqref{eq:prop descent 1} and using $\chi \in [0,1]$, we can infer 
\begin{equation*}
\begin{aligned}
&\hspace{-2ex} [f(z^{k+1}) - \bar f] + \frac{m\alpha_k}{16(1-\beta)} \|\nabla f(x^k)\|^2 + \frac{1-\beta}{4m\alpha_k}\|z^{k+1} - z^{k}\|^2
\\ & \leq \Big[1+\frac{15\sL^3 m^3\alpha_k^3}{8(1-\beta)^3}\Big][f(z^k) -\bar f] + \frac{3 \sL^2 m \alpha_k }{4(1-\beta)}\|z^k-x^k\|^2 - \frac{m\alpha_k}{3(1-\beta)}\|\nabla f(z^k)\|^2.
\end{aligned}
\end{equation*}
Based on the above estimate, applying \cref{lemma:sum z-x} (a), and defining $\delta_k := \frac{m\alpha_k}{1-\beta}[\frac{1}{16}\|\nabla f(x^k)\|^2 + \frac13\|\nabla f(z^k)\|^2] + \frac{1-\beta}{4m\alpha_k}\|z^{k+1}-z^k\|^2$, we further obtain
\begin{equation*}
\begin{aligned}
\nonumber \lya_{k+1} & = [f(z^{k+1})-\bar f] + \frac{9\sL^2m\alpha_{k+1}}{8(1-\beta)(1-\beta^m)} \|z^{k+1} - x^{k+1}\|^2 \\ 
\nonumber & \leq \Big\{1+\frac{\sL^3 m^3\alpha_k^3}{8(1-\beta)^3}\Big[{15}+\frac{63\beta^2}{(1-\beta^m)^2}\Big]\Big\}[f(z^k) -\bar f] - \delta_k   \\
\nonumber & \hspace{4ex} + \frac{3\sL^2 m \alpha_k }{4(1-\beta)}\Big[1 + \frac{3\eta}{2(1-\beta^m)}\Big]\|z^k-x^k\|^2 + \frac{9\sL^2\beta^2m^3\alpha_k^3}{2(1-\beta)^3(1-\beta^m)^2} \|\nabla f(z^k)\|^2 \\
& \leq (1+\sD m^3\alpha_k^3)[f(z^k) -\bar f] + \frac{9\sL^2 m \alpha_k }{8(1-\beta)(1-\beta^m)}\|z^k-x^k\|^2 - \delta_k + \frac{9m\alpha_k}{32(1-\beta)}\|\nabla f(z^k)\|^2
\end{aligned}
\end{equation*}
where the second inequality is because $\alpha_{k+1} \leq \alpha_k$ (cf. \ref{A2}), $1 + \frac{3\eta}{2(1-\beta^m)} = \frac{2(1-\beta^m)+1+2\beta^m}{2(1-\beta^m)} = \frac{3}{2(1-\beta^m)}$, $\frac{9\sL^2\beta^2m^2\alpha_k^2}{2(1-\beta)^2(1-\beta^m)^2} \leq \frac{9}{32}$, and
\[
\frac{1}{8}\Big[15 + \frac{63\beta^2}{(1-\beta^m)^{2}}\Big] \leq \frac{10}{(1-\beta^m)^2},\quad \text{and} \quad \sD = \frac{10}{(1-\beta^m)^2}\Big( \frac{\sL}{1-\beta} \Big)^3.
\]
Hence, due to $\frac{9}{32} - \frac13 \leq -\frac{1}{20} < 0$, for all $k = 1,\dots,T$, it follows that
\[
\lya_{k+1} \leq (1 + \sD   m^3\alpha_k^3)  \lya_k \leq \lya_1 \cdot  {\prod}_{i=1}^T (1 + \sD  m^3\alpha_i^3) \leq [f(z^1)-\bar f] \cdot \exp\Big(\sD \;{\sum}_{i=1}^T  m^3\alpha_i^3\Big),
\]
where we applied $1+x \leq \exp(x)$ and $\lya_1 = f(z^1)-\bar f$. Thus, recalling $\Delta(t):= [f(z^1)-\bar f]\cdot \exp(\sD t)$, we can conclude
\begin{equation*}
\begin{aligned}
    \lya_{k+1}  & \leq \lya_k + \Delta\Big({\sum}_{i=1}^T  m^3\alpha_i^3\Big) \sD m^3\alpha_k^3 \\ & \hspace{4ex} - \frac{m\alpha_k}{16(1-\beta)}\|\nabla f(x^k)\|^2 - \frac{m\alpha_k}{20(1-\beta)} \|\nabla f(z^k)\|^2 - \frac{1-\beta}{4m\alpha_k}\|z^{k+1} - z^{k}\|^2.
\end{aligned}
\end{equation*}

The overall proof strategy for (b) is analogous to part (a). Let us first derive an upper bound for $\lya_{k+1}$ under conditional expectation. According to the definition of $\lya_k$ and using \eqref{eq:prop descent 0}, it holds that
\begin{equation*}
\begin{aligned}
    \Exp_k[\lya_{k+1}] &\leq [f(z^k) -\bar f] - \frac{m\alpha_k}{2(1-\beta)}\|\nabla f(z^k)\|^2  + \frac{\sL^2\alpha_k}{2(1-\beta)} {\sum}_{i=1}^m \; \Exp_k[\|\nes_i^k -z^k\|^2]   \\ & \hspace{4ex} + \frac{9\sL^2m\alpha_{k+1}}{8(1-\beta)(1-\beta^m)}  \, \Exp_k[\|x^{k+1} - z^{k+1}\|^2].
\end{aligned}
\end{equation*}
Invoking \Cref{lemma:sum y-z} (b) and \Cref{lemma:sum z-x} (b) and repeating the steps of part (a), we can infer
\begin{equation*}
\begin{aligned}
\Exp_k[\lya_{k+1}]\leq (1 + \sD b^{-1} m^2\alpha_k^3) \lya_k - \frac{m\alpha_k}{16(1-\beta)} \|\nabla f(x^k)\|^2 - \frac{m\alpha_k}{20(1-\beta)} \|\nabla f(z^k)\|^2.
\end{aligned}
\end{equation*}
(Essentially, by \Cref{lemma:sum y-z} (b) and \Cref{lemma:sum z-x} (b), the terms depending on ``$\alpha_k^3[f(z^k)-\bar f]$'' will be rescaled by the factor $n^{-1} = (mb)^{-1}$). Taking the total expectation, this yields 
\[
\Exp[\lya_{k+1}]\leq (1 + \sD b^{-1} m^2\alpha_k^3) \cdot \Exp[\lya_k] - \frac{m\alpha_k}{16(1-\beta)}\Exp[\|\nabla f(x^k)\|^2].
\]
Given this recursion, we can derive the desired result by mimicking the earlier steps.   
\end{proof}

\section{Proof of \texorpdfstring{\Cref{lemma:kl-bound}}{Lemma 6.7}}\label{proof:lemma:kl-bound}

\begin{proof} 
By assumption, we have $z^k \in U$ and $0 < \psi_k \leq |f(z^{k}) - f^*| + u_{k} +  \sH \alpha_{k}\|z^{k} - x^{k}\|^2< \eta$. Let $\varrho$ be the desingularizing function introduced in \Cref{lem:KL}. Since $\varrho'$ is non-increasing (this follows from the concavity of $\varrho$), we have
\begin{equation}\label{eq:lem:kl-key-3e}
\begin{aligned}
         [\varrho^\prime(\psi_k)]^{-1} & \leq [\varrho^\prime(|\psi_k|)]^{-1}
         \leq [\varrho^\prime(|f(z^{k}) - f^*| + u_{k} +  \sH \alpha_{k}\|z^{k} - x^{k}\|^2)]^{-1}\\
         &\leq
         [\varrho^\prime(|f(z^{k}) - f^*|)]^{-1}
         + \sC \big[ u_k^\vartheta + (\sH\|z^k-x^k\|^2\alpha_k)^\vartheta \big]\\
        &\leq \|\nabla f(z^k)\| + \sC (u_k^\vartheta +  \alpha_k^\vartheta),
\end{aligned}
\end{equation}
where the second line follows from the property \eqref{eq:desingularization function} and
$(u+v)^\vartheta \leq u^\vartheta + v^\vartheta$ for all $u,v\geq 0$ and $\vartheta\in[0,1)$, and the last line is due to the KL inequality \eqref{eq:mer-kl} and $\sH\|z^k-x^k\|^2\leq 1$. (Due to $u_k \neq 0$, this estimate is also valid in the case $f(z^k) = f^*$). Invoking \eqref{eq:lem:kl-key-4} and the concavity of $\varrho$ and recalling $\delta_k = \frac{m\alpha_k}{1-\beta}$, it follows  
\begin{equation}
\begin{aligned}
   \hspace{3ex} \varrho(\psi_{k}) - \varrho(\psi_{k+1})  & \geq \varrho^\prime(\psi_k) \cdot (\psi_k - \psi_{k+1}) \\ 
     & \geq \varrho^\prime(\psi_k) \cdot \Big(\frac{\|z^{k+1} - z^{k}\|^2}{4\delta_k} + \frac{\delta_k}{20} \|\nabla f(z^k)\|^2 + \frac{\delta_k}{16}  \|\nabla f(x^k)\|^2 \Big)  \\
    &\geq \frac{1}{40} \cdot \frac{10 \delta_k^{-1} \|z^{k+1}-z^{k}\|^2 +  2\delta_k\|\nabla f(z^k)\|^2 +  2.5\delta_k\|\nabla f(x^k)\|^2}{\|\nabla f(z^k)\| + \sC (u_k^\vartheta +  \alpha_k^\vartheta)} \\[1mm]
    & \geq  
    \frac{1}{40} \cdot \frac{(\|z^{k+1}-z^{k}\| +  \delta_k\|\nabla f(z^k)\| +  \delta_k\|\nabla f(x^k)\|)^2}{\delta_k\|\nabla f(z^k)\| + \sC \delta_k (u_k^\vartheta + \alpha_k^\vartheta) }\\
    &\geq \frac{1}{40} \cdot [\|z^{k+1}-z^{k}\| +  \delta_k\|\nabla f(x^k)\|-\sC \delta_k (u_k^\vartheta + \alpha_k^\vartheta)], \label{eq:est-1}
\end{aligned}
\end{equation}
where 
 the third line is due to \eqref{eq:lem:kl-key-3e}, the fourth line uses $10 a^2+2b^2+2.5c^2 \geq (a+b+c)^2$, and the last line is due to $(a+b+c)^2/(b+d) \geq a+c-d$ with $a=\|z^{k+1}-z^{k}\|$, $b=\delta_k\|\nabla f(z^k)\|$, $c=\delta_k\|\nabla f(x^k)\|$, and $d=\sC \delta_k (u_k^\vartheta + \alpha_k^\vartheta)$. The desired result follows from rearranging \eqref{eq:est-1}.
\end{proof}

\section{Complexity bounds (\texorpdfstring{\cref{table:super-nice}}{Table 2}): Details} \label{app:table}

In the following, we provide more details about the complexity results reported in \cref{table:super-nice}. 

${}^{\textcolor{blue}{\text{(a)}}}$ The complexity bound for $\RR$ shown in \cref{table:super-nice} is taken directly from \cite{mishchenko2020random}.

${}^{\textcolor{blue}{\text{(b)}}}$ According to \cite[Theorem 1]{liu2020improved}, the complexity of $\SGDM$ is given by
\[ {\min}_{k=1,\dots,T}\; \Exp[\|\nabla f(x^k)\|^2] = \mathcal O({(T\alpha)}^{-1} + \sL\sB^2\alpha), \]
where the (constant) step size $\alpha$ satisfies $\alpha \lesssim \frac{1-\beta}{\sL}$, and $\frac{1}{n} \sum_{i=1}^n \|\nabla f_i(x) - \nabla f(x)\|^2 \leq \sB^2$ holds (cf.\ \cite[Assumption 1]{liu2020improved}). Choosing $\alpha = \min\{\frac{1-\beta}{\sL},\frac{\varepsilon^2}{\sL\sB^2}\}$ and requiring $T \geq \frac{\sL}{\varepsilon^2}\max\{\frac{1}{1-\beta},\frac{\sB^2}{\varepsilon^2}\}$, we can infer $\min_{k=1,\dots,T} \Exp[\|\nabla f(x^k)\|^2] \leq \cO(\varepsilon^2)$.

${}^{\textcolor{blue}{\text{(c)}}}$ The complexity results in \cite{tran2021smg} for $\SMG$ hold for the variance condition $\frac{1}{n} \sum_{i=1}^n \|\nabla f_i(x) - \nabla f(x)\|^2 \leq \sA \|\nabla f(x)\|^2+\sB^2$ (cf.\ \cite[Assumption 1]{tran2021smg}). Applying \cite[Theorem 2]{tran2021smg} with the constant step size parameter $\eta_t \equiv \alpha \lesssim \frac{\sqrt{1-\beta}}{\sL\sqrt{\sA/n+1}}$, the complexity bound of $\SMG$ is
\[ \min_{k=1,\dots,T} \Exp[\|\nabla f(x^k)\|^2] = \mathcal O\Big(\frac{1}{(1-\beta)T\alpha} + \frac{\sL^2\sB^2\alpha^2}{n(1-\beta)}\Big). \]
Thus, setting $\alpha = \frac{\sqrt{1-\beta}}{\sL} \min\{\frac{1}{\sqrt{\sA/n+1}},\frac{\sqrt{n}\varepsilon}{\sB}\}$, we can conclude
\[ \frac{1}{(1-\beta)T\alpha} + \frac{\sL^2\sB^2\alpha^2}{n(1-\beta)} \leq \frac{1}{Tn} \frac{\sL\sqrt{n}}{(1-\beta)^{3/2}}\max\Big\{\sqrt{\sA+n},\frac{\sB}{\varepsilon}\Big\} + \varepsilon^2 = \cO(\varepsilon^2), \]
provided that $Tn \geq \frac{\sL\sqrt{n}}{(1-\beta)^{3/2}\varepsilon^2}\max\{\sqrt{\sA+n},\frac{\sB}{\varepsilon}\}$.

${}^{\textcolor{blue}{\text{(d)}}}$ In \cite{tran2021smg}, complexity of $\RRM$ is studied under the assumption $\|\nabla f_i(x)\| \leq \sG$ for all $x, i$ and if a fixed permutation $\pi^k \equiv \pi$ is used. Applying \cite[Theorem 3]{tran2021smg} with $\alpha_k \equiv \alpha \leq \frac{1}{\sL}$, it holds that
\[ \min_{k=1,\dots,T} \Exp[\|\nabla f(x^k)\|^2] = \mathcal O\Big(\frac{1}{(1-\beta^n)T\alpha} + \sL^2\sG^2\alpha^2 + \frac{\beta^n\sG^2}{1-\beta^n}\Big). \]
Selecting $\alpha = \frac{1}{\sL}\min\{1,\frac{\varepsilon}{\sG}\}$ and $\beta^n \leq \frac{\varepsilon^2}{\sG^2+\varepsilon^2} \lesssim \frac{\varepsilon^2}{\sG^2}$ and requiring $Tn \geq [1+\frac{\varepsilon^2}{\sG^2}]\frac{\sL{n}}{\varepsilon^2}\max\{1,\frac{\sG}{\varepsilon}\}$, we have
\[ \frac{1}{(1-\beta^n)T\alpha} + \sL^2\sG^2\alpha^2 + \frac{\beta^n\sG^2}{1-\beta^n} \leq \frac{\sL}{T} \Big[1+\frac{\varepsilon^2}{\sG^2}\Big] \max\Big\{1,\frac{\sG}{\varepsilon}\Big\} + 2\varepsilon^2 = \cO(\varepsilon^2). \]

${}^{\textcolor{blue}{\text{(e)}}}$ Setting $\alpha = \min\{\frac{1}{4}, [\frac{n}{(1-\beta^m)T}]^{1/3}, [\frac{n}{\sL}]^{1/2}\varepsilon\}$ in \Cref{coro:complexity-constant}, it holds that
\begin{equation*}
 \begin{aligned}
 \min_{k=1,\ldots,T} \Exp[\|\nabla f(x^k)\|^2] &\leq \Big[ \frac{1}{(1-\beta^m)\alpha T} + \frac{3 \alpha^2}{n} \Big] \cdot 16 \sL [f(x^1) - \bar f] \\ & \hspace{-16ex} \leq \Big[\frac{\sL}{Tn}\max\Big\{\frac{4n}{1-\beta^m},\frac{\sqrt{\sL n}}{(1-\beta^m)\varepsilon},\frac{T^{1/3}n^{2/3}}{(1-\beta^m)^{2/3}}\Big\} + 3\varepsilon^2\Big] \cdot 16[f(x^1)-\bar f] = \cO(\varepsilon^2), 
 \end{aligned}
 \end{equation*}
provided that 
$Tn \geq \frac{\sL\sqrt{n}}{(1-\beta^m)\varepsilon^2}\max\{\sqrt{n},\sqrt{\sL}\varepsilon^{-1}\}$.

\bibliographystyle{siam} 
\bibliography{references}

\end{document}